\theoremstyle{plain}
\newtheorem{theorem}{Theorem}[section]
\newtheorem{lemma}[theorem]{Lemma}
\newtheorem{proposition}[theorem]{Proposition}
\newtheorem{thm}{Theorem}
\theoremstyle{definition}
\newtheorem{remark}[theorem]{Remark}
\numberwithin{equation}{section}
\newcommand{\N}{\mathbb{N}}
\newcommand{\R}{\mathbb{R}}
\newcommand{\calD}{\mathcal{D}}
\newcommand{\ind}[1]{\mathbf{1}_{\left\{#1\right\}}}
\newcommand{\crochet}[1]{{\big\langle #1 \big\rangle}}
\newcommand{\uppar}[1]{^{{\scriptscriptstyle (}#1{\scriptscriptstyle )}}}
\renewcommand{\bar}[1]{\overline{#1}}
\renewcommand{\tilde}[1]{\widetilde{#1}}
\renewcommand{\hat}[1]{\widehat{#1}}
\renewcommand{\phi}{\varphi}
\renewcommand{\epsilon}{\varepsilon}
\newcommand{\E}{\mathbf{E}}
\renewcommand{\P}{\mathbf{P}}
\DeclareMathOperator{\Var}{\mathbf{V}\mathrm{ar}}
\newcommand{\calF}{\mathcal{F}}
\newcommand{\dd}{\mathrm{d}}
\newcommand{\calN}{\mathcal{N}}
\newcommand{\Cov}{\mathbf{C}\mathrm{ov}}
\renewcommand{\rho}{\varrho}
\renewcommand{\epsilon}{\varepsilon}
\newcommand{\bbP}{\mathbb{P}}
\newcommand{\bbR}{\mathbb{R}}
\newcommand{\cE}{\mathcal E}
\newcommand{\cD}{\mathcal D}
\newcommand{\rmd}{{\rm d}}
\newcommand{\rme}{{\rm e}}
\newcommand{\diffd}{\mathrm{d}}
\newcommand{\frakD}{\mathfrak{D}}
\renewcommand{\epsilon}{\varepsilon}
\title{A simple backward construction of Branching Brownian motion with large displacement and applications}
\author{
Julien Berestycki\thanks{ \texttt{julien.berestycki@stats.ox.ac.uk} 
University of Oxford }
\and 
\'Eric Brunet\thanks{   \texttt{Eric.Brunet@lps.ens.fr}    Sorbonne
Universit\'e, Laboratoire de Physique Statistique, \'Ecole Normale Sup\'erieure, 
PSL Research University; Universit\'e Paris Diderot Sorbonne Paris-Cit\'e,
CNRS
}
\\  \and Aser Cortines\thanks{ \texttt{aser.cortinespeixoto@math.uzh.ch},
 Universit\"at Z\"urich,
Institute f\"ur Mathematik }
\and Bastien Mallein\thanks{\texttt{mallein@math.univ-paris13.fr}
LAGA, Universit\'e Paris 13}
}
\date{\today}
\begin{document}

\maketitle

\begin{abstract}
In this article, we study  the extremal processes of branching Brownian motions conditioned on having an unusually large maximum. The limiting point measures form a one-parameter family and are the decoration point measures in the extremal processes of several branching processes, including branching Brownian motions with variable speed and multitype branching Brownian motions. We give a new, alternative representation of these point measures and we show that they form a continuous family. 
This also yields a simple probabilistic expression for the constant that appears in the large deviation probability of having a large displacement. As an application, we show that Bovier and Hartung's \cite{BoH15} results about  variable speed branching Brownian motion also describe the extremal point process of branching Ornstein-Uhlenbeck processes.
\end{abstract}

\section{Introduction}

Spatial branching processes, and in particular, the behaviour of their extremal particles, have been at the centre of a wide research activity over the past few years, both in the physics \cite{BD09,BDMM06,DMS} and in the mathematical literature \cite{Aid,ABBS,ABK,Madaule}. These models have a rich and complex structure that is of intrinsic interest, but they are also representatives of an intriguing ``universality'' class, the so-called \emph{log-correlated fields} which includes the two-dimensional Gaussian free field \cite{BDZ16,BL18}, Gaussian multiplicative chaos \cite{rhodes2013gaussian}, random matrices \cite{ABB2017} and others.

Perhaps the simplest model in this class is the \textit{branching Brownian motion}, in which particles move in $\R$ as Brownian motions, branch into two particles at rate one and behave independently of each others. For the system started with a single particle at the origin, let $\mathcal{N}_t$ be the set of particles alive at time $t$ and for $u \in \mathcal{N}_t$ let $X_t(u)\in \R$ be its position. For $s\le t$ we will also write $X_s(u)$ for the position of the unique ancestor of $u$ at time $s$ so that $(X_s(u), s \leq t)$ is the path followed by the particle $u$. Then, it was proved in \cite{ABBS,ABK} that the point measure
\begin{equation}\label{BBM}
  \mathcal{E}_t := \sum_{u \in \calN_t} \delta_{X_t(u) - \sqrt{2} t + \frac{3}{2\sqrt{2}} \log t}
\end{equation}
converges in law, as $t \to \infty$ toward a \emph{random intensity decorated Poisson point process} (DPPP for short) $\mathcal{E}_\infty$.

In general, the law of a DPPP $\cE$ is characterized by a pair $(\nu, \frakD)$ where $\nu$ is a random sigma-finite measure on $\R$ and $\frakD$ is the law of a random point process on $\R$. The point measure $\cE$ can be constructed, conditionally on $\nu$, by first taking a realisation of a Poisson point process on $\R$ with intensity $\nu$, whose atoms are listed as $(x_i, i \in I)$, and an independent family of i.i.d.\@ point processes $(\cD_i, i \in I)$ with law $\frakD$. Then, each atom $x_i$ is replaced by the point process $\cD_i$, shifted by $x_i$ (this action is called the {\it decoration} of $x_i$ with a point process of law $\frakD$). In other words, writing $(d_i^j, j \in J_i)$ the atoms of the point process $\cD_i$, we have
\begin{equation}
  \label{eqn:defDPPP}
  \cE = \sum_{i \in I} \sum_{j \in J_i} \delta_{x_i + d^j_i}.
\end{equation}
We refer to \cite{SubZei} for an in-depth study of random intensity decorated Poisson point processes, and their occurrences as limit of extremal point measures.

With this notation, $\mathcal{E}_\infty := \lim_{t\to \infty} \cE_t$ is the following DPPP 
\begin{equation}\label{DPPP du BBM}
\mathcal{E}_\infty= \text{DPPP}( \kappa Z_\infty \rme^{-\sqrt{2} x} \diffd x, \frakD^1)
\end{equation}
where $\kappa$ is an implicit constant, $Z_\infty$ is the a.s.\@ positive limit of the so-called \emph{derivative martingale}
\begin{equation}
  Z_t := \sum_{u \in \mathcal{N}_t} \big(\sqrt{2} t - X_t(u)\big) \rme^{\sqrt{2}X_t(u) - 2 t},
\end{equation}
and where the \textit{decoration law} $\frakD^{1}$ is the law of a point measure supported on $(-\infty,0]$, with  an atom at $0$ defined by the following weak limit
\begin{equation}
 \label{eqn:defineDrho1}
   \mathfrak{D}^{1} (\cdot) := \lim_{t \to \infty} \textstyle \P \Big( \sum\limits_{u \in \mathcal{N}_t } \delta_{\{ X_t(u) - M_t \} } \in \cdot  \,  \big| \, M_t \geq \sqrt{2} t \Big),
\end{equation}
where $M_t := \max_{u \in \mathcal{N}_t} X_t(u)$. Moreover, it is well-known that $\max \cE_t$ converges in distribution toward $\max \cE_\infty$, where $\max \cE$ is the position of the largest atom in a point process $\cE$ (see Lalley and Selke \cite{LaS}).

The decoration law $\frakD^1$ belongs to the family $\left(\mathfrak{D}^\rho, \rho \in [1,\infty] \right)$, defined, for $\rho < \infty$ by the weak limits
\begin{equation}
 \label{eqn:defineDrho}
   \mathfrak{D}^{\rho} (\cdot) := \lim_{t \to \infty} \textstyle \P \Big( \sum\limits_{u \in \mathcal{N}_t } \delta_{\{ X_t(u) - M_t \} } \in \cdot  \,  \big| \, M_t \geq \sqrt{2} \rho t \Big).
\end{equation}
We denote by $\mathfrak{D}^\infty$ the law of the Dirac mass at $0$. The family $ \mathfrak{D}^{\rho} (\cdot)$ was introduced by Bovier and Hartung \cite{BoH15} as the decorations appearing in the extremal processes of variable speed branching Brownian motions.  A detailed statement of the result of Bovier and Hartung is given in Section \ref{sec:bou}. The decoration law $\mathfrak{D}^{\rho} (\cdot)$ can also appear in the context of multitype branching Brownian motions.

Note that the law $\mathfrak{D}^\rho$ is constructed by conditioning the branching Brownian motion on a large deviation event for its maximum. For  $\rho \in (1,\infty)$ we define
\begin{equation}
  \label{eqn:defineC}
 C(\rho) := \rho \lim_{t \to \infty} t^{1/2} \rme^{(\rho^2-1)t} \P(M_t \geq \sqrt{2} \rho t).
\end{equation}
The asymptotic behaviour of $\P(M_t > \sqrt 2 \rho t)$ was first studied in the seminal paper \cite{ChRo88} (where the existence of the limit $C(\rho)$ is implicit) and the function $C(\rho)$ plays a key role in \cite{BoH15} where it is proven that $C(1)=0$ and that $\lim_{\rho \to  \infty} C(\rho)=(4\pi)^{-1/2}$.
More recently, the same function $C(\rho)$ is the focus of \cite{DMS} where, in particular, the asymptotic behaviour of $C(\rho)$ as $\rho \to \infty$ and $\rho \to 1$ are conjectured. See \cite{DerridaShi,GantertHof,Burac} for further recent developments on this topic.

\begin{figure}[ht]
\centering
\input{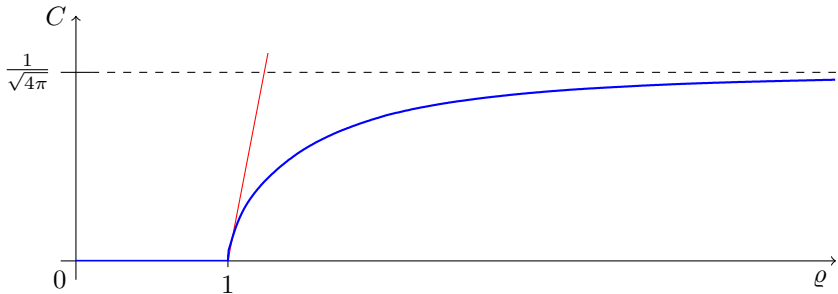}
\caption{An approximation of the function $C$, computed using its representation from Theorem~\ref{prop_continuity_decoration_fctn}, together with its right derivative at $\rho = 1$.}
\end{figure}

The goal of this article is to study both the function $\rho \mapsto C(\rho)$ and the family $(\mathfrak{D}^\rho, \rho \in (1,\infty])$. We provide a new construction of these quantities, that do not rely on the conditioning on a vanishing event but uses a \emph{spine decomposition}. Recall that a sequence of random point measures $(\mathcal{P}_t)_{t \geq 0}$ on $\bbR$ converges to $\mathcal{P}$ in law for the topology of vague convergence if and only if, for every compactly supported continuous function $\phi$, the real valued random variables
\begin{equation}
  \label{eqn:defineCrochet}
  \crochet{\mathcal{P}_t,\phi} := \int \phi(x) \mathcal{P}_t (\rmd x )
\end{equation}
converge in law to $\crochet{\mathcal{P},\phi}$ as $t\to \infty$. We prove in this article that $C$ is continuous on $[1,\infty]$, and that $\rho \mapsto \mathfrak{D}^\rho$ is continuous on $(1,\infty]$ for the topology of vague convergence. This can be used to extend the main theorem of \cite{BoH15}.

Let $(B_t, t \geq 0)$ be a standard Brownian motion, $(\sigma_k, k \in \N)$ be the ranked atoms of a Poisson point process with intensity $2 \, \rmd x$ on $\R_+$ and $(X^{(k)}_t(u), u \in \calN^{(k)}_t, t \geq 0)$ for $ k \in \N$ be i.i.d.\@ branching Brownian motions. We shall assume that $B$, $(\sigma_k, k \geq 1)$ and $(X^{(k)}, k \geq 1)$ are independent of one another. Given $\rho \in (1,\infty)$ and $t \geq 0$, we define the point process
\begin{equation}
  \label{eqn:tildecalD}
  \tilde{\mathcal{D}}^\rho = \delta_0 + \sum_{k \in \N } \sum_{u \in \calN^{(k)}_{\sigma_k}} \delta_{B_{\sigma_k} - \sqrt{2} \rho \sigma_k + X^{(k)}_{\sigma_k}(u) }.
\end{equation}
In words, $\tilde{\mathcal{D}}^\rho$ is the point process constructed using a Brownian motion with drift $-\sqrt{2}\rho$, that spawns branching Brownian motions at rate $2$. A branching Brownian motion spawned at time $\sigma_k$ then starts evolving backward in time until it hits time $0$, the particles alive at that time are added to the point process.

\begin{figure}[ht]
\begin{center}
\includegraphics[width=5cm]{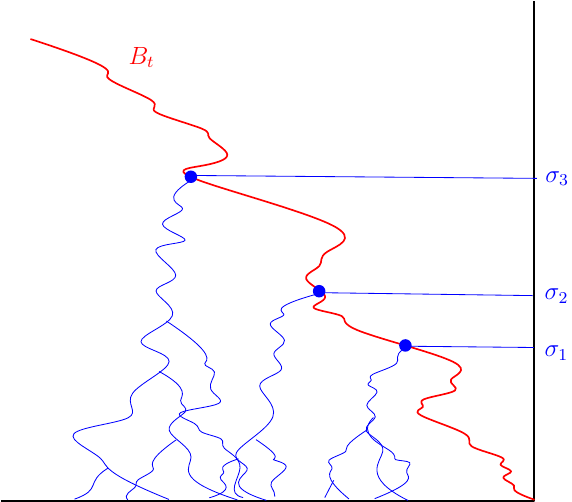}
\end{center}
\caption{Construction of the point process $\tilde{\mathcal D}^\rho$.}
\end{figure}

\begin{theorem} \label{prop_continuity_decoration_fctn}
Let $C : [1,\infty] \mapsto \R_+$ be the function given by \eqref{eqn:defineC} and for $\rho \geq 1$ let $\mathcal D^\rho$ be a random point measure of law  $\frakD^\rho$ as defined in \eqref{eqn:defineDrho}. Then
\begin{enumerate}[label=(\roman*)]
  \item $C(\rho) = \frac{1}{\sqrt{4\pi}}\P\big(\tilde{\mathcal{D}}^\rho((0,\infty)) = 0\big)$ for all $\rho>1$. The function $C$ is continuous on $[1,\infty]$. It also satisfies ${C}(1)=0$, $C(\rho)>0$ for $\rho>1$ and ${C}(\infty) = 1/\sqrt{4\pi}$. 
 \label{lem:continuityTildeC}
 \item $\P \big( \mathcal{D}^\rho \in \cdot \big) = \P \big( \tilde{\cD}^\rho \in \cdot \mid \tilde{\cD}^\rho((0,\infty)) = 0 \big) $. The family of point processes $\left(\mathcal{D}^\rho, \, \rho \in (1, \infty] \right)$ is continuous in the space of Radon point measures equipped with the topology of vague convergence.
 \label{lem:continuitycalD}
\end{enumerate}
\end{theorem}

The rest of the article is organized as follows. In Section \ref{sec:spine} we introduce the \emph{spinal decomposition} of the branching Brownian motion, and its application to the extremal process of the branching Brownian motion, seen from the rightmost particle. We then prove Theorem \ref{prop_continuity_decoration_fctn} in Section \ref{sec:pf}. Then, as an application of Theorem \ref{prop_continuity_decoration_fctn}, in Section \ref{sec:bou} we  show how the results of Bovier and Hartung \cite{BoH14,BoH15} about variable speed branching Brownian motion also describe the extremal point process of  a branching Ornstein-Uhlenbeck. We conclude this article with some open questions.

\section{Spinal decomposition at the maximum}
\label{sec:spine}

We apply the so-called \emph{spinal decomposition} of the branching Brownian motion to obtain the joint law of the maximum and   the extremal process of the branching Brownian motion. The \emph{spinal decomposition} is an alternative description of the process constructed via the \emph{probability tilting} by the additive martingale $W^{\sqrt{2}\rho}$, which is defined for all $t \geq 0$ by
\begin{equation}
  \label{eqn:defMartingale}
  W_t^{\sqrt{2} \rho} := \sum_{u \in \mathcal{N}_t} \mathrm{e}^{\sqrt{2} \rho X_t(u) - (\rho^2 + 1) t}.
\end{equation}
This idea was pioneered by Lyons, Peamantle and Peres in \cite{LPP95} to study Galton-Watson processes, then generalized to branching random walks by Lyons \cite{Lyo97} and to general branching processes in \cite{BiK04}. 

Let $(\calF_t)$ be the natural filtration of the branching Brownian motion, defined by
\[
  \calF_t = \sigma\left( \calN_s, (X_s(u), u \in \calN_s), s \leq t \right).
\]
For $\rho \in \R$ and $t \geq 0$, we introduce the size-biased law as 
\begin{equation}
  \label{tilting}
  \bar{\P}_\rho\big|_{\calF_t} = W^{\sqrt{2}\rho}_t \cdot \P \big|_{\calF_t},
\end{equation} 
and call $X$ under $\bar{\P}_\rho$ the size biased process. 

The spinal decomposition links the size biased process with the so-called \emph{branching Brownian motion with spine}. It describes the evolution of a branching particle system with a distinguished particle $\xi_t$, which behaves differently from the others. The system starts with the spine particle at position $0$. This particle moves according to a Brownian motion with drift $\sqrt{2} \rho$ and produces children at rate $2$. Each of its children starts an independent (standard) branching Brownian motion from its birth place. We shall use the same notation $\calN_t$ for the set of particles alive at time $t$ in this process (it is not a Yule process anymore), and write $\xi_t \in \calN_t$ for the label of the spine particle.
The law of this branching Brownian motion with spine is denoted by $\hat{\P}_\rho$. The spinal decomposition can be stated as follows.

\begin{thm}[Spinal decomposition \cite{Bere}]
\label{them_spinal_dec}
For all $\rho \in \R$, with the above notation we have
$ \bar{\P}_\rho\big|_{\calF_t} =\hat{\P}_\rho \big|_{\calF_t}$ for all $t \geq 0$. Moreover, for all $u \in \mathcal{N}_t$,
\[\hat{\P}_\rho\left( \xi_t = u \mid \calF_t \right) = \frac{\rme^{\sqrt{2} \rho X_t(u) - t (\rho^2 + 1)}}{W_t^{\sqrt 2\rho}}.\]
\end{thm}

In words: the law of the marked tree $((X_s(u), u \in \calN_s), s\leq t)$ has same law under probability $\hat{\P}$ and $\bar{\P}$. Moreover, conditionally on this marked tree, one can choose to distinguish at random an individual $u \in \calN_t$ with probability proportional to $\rme^{\sqrt{2} \rho X_t(u)}$ to construct the law of the branching Brownian motion with spine.

Using this result, we can describe the joint law of the extremal process and the maximal displacement of the branching Brownian motion.
\begin{lemma}
\label{lem:critique}
Let $\rho \geq 1$ and $t \geq 0$, we denote by
\[
  \textstyle{\cE}^*_t = \sum_{u \in \mathcal{N}_t} \delta_{X_t(u) - M_t}
\]
the extremal process of the branching Brownian motion seen from the rightmost individual, and we introduce the point process
\begin{align}
  \label{eqn:tildecalDt}
  \tilde{\mathcal{D}}^\rho_t & = \delta_0 + \sum_{k \in \N : \sigma_k \leq t} \sum_{u \in \calN^{(k)}_{\sigma_k}} \delta_{B_{\sigma_k} - \sqrt{2} \rho \sigma_k+ X^{(k)}_{\sigma_k}(u) },
\end{align}
where $B$ is a Brownian motion, $(\sigma_k, k \geq 1)$ are the jump times of a Poisson process with intensity $2$ and $(X^{(k)}(u)_s, u \in \mathcal{N}_s, s \geq 0)_{k \in \N}$ are i.i.d. branching Brownian motions. For all non-negative measurable function $f,F$, we have
\[
  \E\left( F({\cE}^*_t) f(M_t-\sqrt{2} \rho t) \right) = \rme^{(1-\rho^2) t}\E\left( \rme^{\sqrt{2}\rho B_t} f(-B_t) F\left( \tilde{\calD}^\rho_t\right) \ind{ \tilde{\calD}^\rho_t((0,\infty))=0} \right).
\]
\end{lemma}

\begin{proof}
For $t \geq 0$, denote by $u_t^{\mathrm{tip}} \in \calN_t$ the label of the largest particle alive at time $t$ (which is a.s.\@ unique). We observe that we can write
\[
  \E\left( F({\cE}^*_t)f(M_t - \sqrt{2}\rho t)  \right) = \E\left(\sum_{u \in \calN_t} F\big({\cE}^*_t(u) \big) \ind{u = u_t^{\mathrm{tip}}} f(M_t - \sqrt{2}\rho t) \right),
\]
where ${\cE}^*_t(u) := \sum_{v \in \calN_t} \delta_{X_t(v) - X_t(u)}$ is the extremal point measure seen from particle $u \in \calN_t$. Thanks to the spinal decomposition and using \eqref{tilting}, the above reads 
\begin{align*}
 \E\left( F({\cE}^*_t)f(M_t - \sqrt{2}\rho t) \right) &= \bar{\E}_\rho  \left( \frac{1}{W_t^{\sqrt 2\rho}}\sum_{u \in \calN_t}  F\big({\cE}^*_t(u) \big) \ind{u=u_t^{\mathrm{tip}}} f(M_t - \sqrt{2} \rho t) \right)\\
  &= \hat{\E}_\rho \left( \rme^{-\sqrt{2} \rho X_t(\xi_t) + (\rho^2 + 1)t} F({\cE}^*_t(\xi_t)) \ind{\xi_t=u_t^{\mathrm{tip}}} f(X_t(\xi_t) - \sqrt{2} \rho t) \right).
\end{align*}

Next, we use the definition of the branching Brownian motion with spine to rewrite the above expression. For $s\in [0,t]$ let $B_s = Z_{t-s}-Z_t$ where  $Z_s = X_t(\xi_s) - \sqrt{2} \rho s$ and for all $k \in \N$, $\sigma_k$ is the $k$th instant at which the spine gives birth to a new particle when running time backward from $t$ (i.e.\@ $t-\sigma_1$ is the last time before $t$ at which the spine branches).Then, under $\hat \E_\rho$, $B$ is a standard Brownian motion and $(\sigma_k)$ are the atoms of a Poisson point process on $\R_+$ with intensity measure~$2 \rmd x$. For each branching event $\sigma_k$, the spine gives birth to a standard branching Brownian motion that we call $X^{(k)} \equiv (X^{(k)}_s(u), u \in \calN^{(k)}_s; \, s \in \bbR_+)$. With these notation we then get 
\begin{equation} \label{Point_process_seen_from_the_tip_BM}
  {\cE}^*_t(\xi_t) =\sum_{k \in \N : \sigma_k \leq t} \sum_{u \in \calN^{(k)}_{\sigma_k}} \delta_{B_{\sigma_k} - \sqrt{2} \rho \sigma_k + X^{(k)}_{\sigma_k}(u)}.
\end{equation}

All that is left to do is thus to note that  under $\hat \E_\rho$, the pair of variables $({\cE}^*_t(\xi_t), X_t(\xi_t) ) $ jointly have the same law as  $(\tilde{\calD}^\rho_t, (-B_t+\sqrt 2 \rho t))$ from \eqref{eqn:tildecalDt}. Thus substituting 
 $ \ind{\xi_t = u_t^{\mathrm{tip}}}$ by $ \ind{\tilde{\calD}^\rho_t((0,\infty))=0}$ and  $ f(X_t(\xi_t) - \sqrt{2} \rho t)$ by $f(-B_t) $ we conclude that
\[
  \E\left( F({\cE}^*_t)f(M_t - \sqrt{2}\rho t)  \right) = \rme^{(1-\rho^2)t}\E\left( \rme^{\sqrt{2} \rho B_t} f(-B_t) F(\tilde{\calD}^\rho_t) \ind{\tilde{\calD}^\rho_t((0,\infty))=0} \right). \qedhere
\]
\end{proof}

We are now going to show that for all $\rho > 1$, the point measure $\tilde{\mathcal{D}}^\rho$ given in \eqref{eqn:tildecalD} is well-defined as the increasing limit of $\tilde{\calD}^\rho_t$ as $t \to \infty$. Recall that, with the notation of Lemma~\ref{lem:critique}, we have
\begin{align}
\label{eqn:definitiondelamesureDhrotilde}
 & \tilde{\mathcal{D}}^\rho = \delta_0 + \sum_{k \in \N } \sum_{u \in \calN^{(k)}_{\sigma_k}} \delta_{B_{\sigma_k}  - \sqrt{2} \rho \sigma_k+ X^{(k)}_{\sigma_k}(u)}.
\end{align}
Our first step is to prove that $\tilde{\mathcal{D}}^\rho$ is a well-defined sigma-finite point measure, meaning that for every  $a < b \in (-\infty, \infty]$, we have $\tilde{\calD}^\rho \big( [a, b]\big) <\infty$ a.s.\@.

\begin{lemma}
\label{lem:wellDefinition}
For all $\rho > 1$, $\tilde{\calD}^\rho$ is a well-defined point measure. Moreover, we have
\[
  \lim_{t \to \infty} \tilde{\calD}^\rho_t = \tilde{\calD}^\rho \quad \text{ a.s.\@ for the topology of the vague convergence.}
\]
\end{lemma}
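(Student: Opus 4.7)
My plan is to prove that $\tilde{\calD}^\rho([a,\infty)) < \infty$ almost surely for every $a \in \R$. Once this is established, the monotone convergence $\tilde{\calD}^\rho_t([a,b]) \nearrow \tilde{\calD}^\rho([a,b])$ on any compact $[a,b]$ (with $a$ finite) follows immediately: by construction $\tilde{\calD}^\rho_t$ is increasing in $t$ and its pointwise limit is $\tilde{\calD}^\rho$, and a.s.\@ finiteness of the limit on compact sets yields the vague convergence.

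To prove the finiteness, I would condition on the pair $(B, \sigma) = ((B_s)_{s\geq 0}, (\sigma_k)_k)$, which is independent of the i.i.d.\@ branching Brownian motions $(X^{(k)})_k$. Under this conditioning, the events
\[
  E_k := \bigl\{\exists u \in \calN^{(k)}_{\sigma_k} : X^{(k)}_{\sigma_k}(u) \geq a + \sqrt{2}\rho\sigma_k - B_{\sigma_k}\bigr\}
\]
are mutually independent, so by the conditional Borel--Cantelli lemma it suffices to prove $\sum_k \P(E_k \mid B, \sigma) < \infty$ a.s. The many-to-one lemma applied to the $k$-th BBM gives $\P(E_k \mid B, \sigma) \leq e^{\sigma_k}\,\P\bigl(\sqrt{\sigma_k}\,N \geq a + \sqrt{2}\rho\sigma_k - B_{\sigma_k}\bigr)$ for a standard Gaussian $N$. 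Since $B_s/s \to 0$ a.s., the threshold is positive for all $\sigma_k$ past some random time $S$; combining the Gaussian tail $\P(N \geq z) \leq (z\sqrt{2\pi})^{-1}e^{-z^2/2}$ with the elementary expansion (and dropping the non-positive cross-term $-(a-B_{\sigma_k})^2/(2\sigma_k)$) yields
\[
  \P(E_k \mid B, \sigma) \;\leq\; \frac{C(a,\rho)}{\sqrt{\sigma_k}} \exp\!\bigl(\sigma_k(1 - \rho^2) + \sqrt{2}\rho B_{\sigma_k}\bigr), \qquad \sigma_k \geq S.
\]
Taking conditional expectation over the Poisson process via Campbell's formula reduces the required estimate to the a.s.\@ convergence of $\int^\infty s^{-1/2} \exp\bigl(s(1-\rho^2) + \sqrt{2}\rho B_s\bigr)\, ds$. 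The hypothesis $\rho > 1$ is crucial here: picking any $\epsilon \in (0, \rho^2 - 1)$, the strong law $B_s/s \to 0$ ensures $\sqrt{2}\rho B_s \leq \epsilon s$ for all large $s$, so the integrand is eventually bounded by $C s^{-1/2} e^{-(\rho^2-1-\epsilon)s}$, hence integrable; the finitely many $\sigma_k < S$ contribute only a finite random amount, both to the sum and to the point measure itself, since each BBM at time $\leq S$ has finite population.

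The subtle point to flag is that the \emph{unconditional} first moment $\E[\tilde{\calD}^\rho([a,\infty))]$ is in fact infinite for $\rho \in (1, \sqrt{2}]$: if one tried to take expectation over $B$ in the bound above, the factor $\E[e^{\sqrt{2}\rho B_s}] = e^{\rho^2 s}$ would overwhelm the decay $e^{s(1-\rho^2)}$. The whole argument therefore must be run pathwise in $B$, exploiting the fact that a.s.\@ $B_s = o(s)$ is strictly better than its $L^1$ behaviour. Correctly separating the pathwise treatment of the spine from the expectation over the Poisson process is where care is required; once done, conditional Borel--Cantelli finishes the proof.
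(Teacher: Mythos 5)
Your argument is correct, and it takes a genuinely different route from the paper's. The paper relies on the sharp uniform exponential tail estimate for the centred maximum of a branching Brownian motion (the bound $\P(|M_t - m_t| \geq x) \leq C\rme^{-\lambda x}$ from \cite{fang2012}), applies Borel--Cantelli to deduce that $|M^{(k)} - \sqrt{2}\sigma_k| = O(\log k)$ almost surely, and combines this with the law of large numbers for the spine random walk $B_{\sigma_k} - \sqrt{2}(\rho-1)\sigma_k$ to conclude that $B_{\sigma_k} + M^{(k)} - \sqrt{2}\rho\sigma_k \to -\infty$ at linear speed. You instead use only the many-to-one first-moment bound on each spawned BBM, a Gaussian tail estimate, Campbell's formula to control the sum over the Poisson process of spawning times, and Borel--Cantelli conditionally on the spine. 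This is more elementary in the sense that it does not invoke the (nontrivial) tightness-with-tails result for the BBM maximum; on the other hand the paper's route yields the slightly stronger conclusion that $B_{\sigma_k} + M^{(k)} - \sqrt{2}\rho\sigma_k \to -\infty$ linearly, which the authors then reuse in the proof of Lemma~\ref{lem:computation}. Your remark that the unconditional first moment of $\tilde{\calD}^\rho([a,\infty))$ is infinite for $\rho \in (1,\sqrt{2}]$ is correct and nicely isolates why the argument must be run pathwise in $B$ (a point the paper's proof exploits implicitly but does not flag). One small note: since you are applying the \emph{first} Borel--Cantelli lemma, the conditional independence of the $E_k$ given $(B,\sigma)$ is not actually needed — summability of the conditional probabilities alone suffices.
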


\begin{remark}
Note that the above result would not hold for $\rho = 1$, as in that case one can prove that $\lim_{t \to \infty} \tilde{\calD}^1_t(0,1) = \infty$ a.s.
\end{remark}

\begin{proof}
Let $\rho > 1$, the point measure $\tilde{\calD}^\rho $ can be rewritten as
\begin{equation}\label{eq:reecriture_cluster_law}
  \tilde{\calD}^\rho = \delta_0 + \sum_{k \in \N } \sum_{u \in \calN^{(k)}_{\sigma_k}} \delta_{\left(B_{\sigma_k} - \sqrt{2}(\rho - 1) \sigma_k\right) + \left(X^{(k)}_{\sigma_k}(u) - \sqrt{2} \sigma_k\right)}.
\end{equation}
We observe that $\left(B_{\sigma_k} - \sqrt{2}(\rho - 1) \sigma_k, k \geq 0\right)$ is a random walk with negative drift $-\sqrt{2}(\rho-1)/2$. Moreover, for all $k \in \N$ the position of the largest atom in the point measure $\sum_{u \in \calN^{(k)}_{\sigma_k}} \delta_{X^{(k)}_{\sigma_k}(u) - \sqrt{2} \sigma_k}$  is, for large values of $k$, typically around position $-\frac{3}{2\sqrt{2}} \log \sigma_k \approx - \frac{3}{2\sqrt{2}} \log k$. Thus, heuristically, if $\rho > 1$, the random walk drifts to $-\infty$ at positive speed such that only a finite number of branching Brownian motions put particles in any given compact set. On the other hand,  when $\rho = 1$, the random walk $B_{\sigma_k}$ has  drift zero and we show that it implies that an infinite number of particles are to be found in any finite neighbourhood of $0$.

To make the above argument rigorous, we write $M_t$ for the maximal displacement at time $t$ in a branching Brownian motion. Setting $m_t = \sqrt{2} t - \frac{3}{2\sqrt{2}} \log t$, It is well-known \cite{ABBS,ABK} that $(M_t - m_t, t \geq 0)$ is tight and has uniform exponential tails. More precisely, it is proved in \cite{fang2012} (in a much more general settings) there exists $C>0$ and $\lambda > 0$ such that 
\begin{equation}
 \label{eqn:estimateTail}
 \P\left( |M_t - m_t|  \geq  x\right) \leq C \rme^{-\lambda x} \text{ for all $t, x >0$}.
\end{equation}

Given $k \in \N$, we denote by $M^{(k)} = \max_{u \in \calN^{(k)}_{\sigma_k}} X^{(k)}_{\sigma_k}(u) $ the maximal displacement of $X^{(k)}$ at time $\sigma_k$. Using the bounds from \eqref{eqn:estimateTail}, we observe immediately, using the Borel-Cantelli Lemma and the fact that $\sigma_k \sim_{k \to \infty} k/2$ that, with probability one,
\begin{equation} \label{eq_as_convergence_BBM_devided_log}
  \limsup_{k \to \infty} \frac{\left|M^{(k)} - \sqrt{2} \sigma_k\right|}{\log k} \leq \frac{3}{2\sqrt{2}} + \lambda^{-1}.
\end{equation}

In view of \eqref{eq_as_convergence_BBM_devided_log} and the law of large numbers, we deduce that
\begin{equation}\label{eq:bounds_decorated_rw}
  \lim_{k \to \infty} \frac{1}{k}\left( B_{\sigma_k} + M^{(k)} - \sqrt{2} \rho \sigma_k \right) = - \frac{\sqrt{2} (\rho-1)}{2}< 0 \quad \text{a.s.}
\end{equation}
In particular, it implies that given $A > 0$ one can find a random $T \in \bbR_+$ such that
\[ \forall \sigma_k \geq T, \quad B_{\sigma_k} + M^{(k)} - \sqrt{2} \rho \sigma_k \leq - A,\] in which case $\tilde{\calD}^\rho((x,\infty)) = \tilde{\calD}^\rho_{t}((x,\infty))$ for all $t > T$ and $x>-A$. This proves that $\tilde{\calD}^\rho$ is locally finite a.s.\@ and that $\tilde{\calD}^\rho_t\nearrow  \tilde{\calD}^\rho$ as $t \to \infty$, as claimed.
\end{proof}

Next we show the weak continuity of the family $(\tilde{\calD}^\rho, \varrho >1)$. 
\begin{lemma}
\label{lem:wellDefinition_continuity}
The family of point processes $(\tilde{\calD}^\rho, \varrho >1)$ is a.s.\@ continuous in $\rho > 1$. Moreover, 
for all $\rho > 1$, $\P(\tilde{\calD}^\rho((0,\infty)=0)>0$ and
\[
  \lim_{t \to \infty} \P\left( \tilde{\calD}^1_t \big((0,\infty) \big) = 0 \right) = 0 \quad \text{and} \quad  \lim_{\rho\to1}\P\left( \tilde{\calD}^\rho \big((0,\infty) \big) = 0\right)= 0.
\]
\end{lemma}

\begin{proof}
To prove the a.s.\@ continuity of $(\tilde{\calD}^\rho, \rho > 1)$, it is enough to show that for all continuous function $\phi$ with compact support, the function $\rho \mapsto \crochet{\phi,\tilde{\calD}^\rho}$ is continuous a.s.\@ This is a direct consequence of the fact that there are only finitely many atoms in any compact interval, and that the position of these atoms in $\calD^\rho$ are decreasing and continuous with $\rho$, by \eqref{eqn:tildecalD}. Hence, for any $\rho_0>1$, there is only a finite number of atoms to follow as $\rho$ increases to compute $\rho \in [\rho_0,\infty) \mapsto \crochet{\phi,\tilde{\calD}^\rho}$. Hence this function is continuous, which completes the proof of the first statement. For the second statement,  it suffices to observe that for $T>0$ there is positive probability that $\sigma_1>T$ and that $\tilde{\calD}^\rho(0,\infty)-\tilde{\calD}^\rho_{T}(0,\infty)=0.$

We now focus on the case  $\rho = 1$.  By law of iterated logarithms for the random walk,  we have that
\[ \limsup_{k \to \infty} k^{-1/2} B_{\sigma_k} = \infty \quad \text{ a.s.},\] 
which together with \eqref{eq_as_convergence_BBM_devided_log} yields 
\[
\limsup_{k \to \infty} \frac{B_{\sigma_k} + M^{(k)} - \sqrt{2}\sigma_k}{k^{1/2}} = \infty
\quad \text{a.s.}
\] 
This shows that the event $\{B_{\sigma_k} + M^{(k)} - \sqrt{2}\sigma_k \geq a \text{ infinitely often}\}$ has probability $1$ for every $a>0$. In particular it implies that $\tilde{\calD}^{1}_t((a,\infty)) \uparrow \infty$ a.s.\@ as $t \to \infty$. 

To conclude the proof, we observe that for all $\epsilon > 0$, there exists $t > 0$ such that
$ \P(\tilde{\calD}^{1}_t((0,\infty)) = 0) < \epsilon$. 
At the same time it follows from \eqref{eqn:tildecalDt} that $\tilde{\calD}^\rho_t$ is continuous in $\rho \in \R$, hence for all $\rho > 1$ small enough, we have 
\[
  \P\left(\tilde{\calD}^\rho((0,\infty)) = 0\right) \leq \P\left( \tilde{\calD}^\rho_t((0,\infty)) = 0 \right) \leq 2\epsilon,
\]
which shows that $\lim_{\rho \to 1} \P\left(\tilde{\calD}^\rho((0,\infty)) = 0\right)  = 0$, completing the proof.
\end{proof}

\section{Probabilistic representation of the extremal point process conditioned on a large maximum}
\label{sec:pf}

In this section, we prove the weak continuity in $\varrho$ of the cluster point process $\mathfrak{D}^\rho$ as well as the continuity of the function $\varrho \mapsto C(\rho)$ and their spine representation. To prove this, we show that the cluster law $\frakD^\rho$ and the function $C$ can be computed as continuous functionals of $\tilde{\frakD}^\rho$ defined in \eqref{eqn:tildecalD}. This connection is based an application of Lemma~\ref{lem:critique} to the study of the extremal process of the branching Brownian motion conditioned on having a maximum larger than $\sqrt{2} \rho$. Those results in combination complete the proof of Theorem~\ref{prop_continuity_decoration_fctn}.

We begin with the following computation of the extremal process of the branching Brownian motion conditioned on satisfying $\{M_t \geq \sqrt{2} \rho t\}$.
\begin{lemma}
\label{lem:computation}
Let $\rho>1$  and $\varphi \,: \bbR \mapsto \bbR_+$ be a continuous
function whose support is bounded from the left. Then
\begin{equation*}
  \lim_{t \to \infty} \rho t^{1/2} \rme^{(\rho^2-1)t} \E\left( \rme^{-\crochet{{\cE}^*_t,\phi}} \ind{M_t \geq \sqrt{2}\rho t} \right)
  = \frac{1}{\sqrt{4\pi}}\E\left( \rme^{-\crochet{\tilde{\calD}^\rho,\phi}} \ind{\tilde{\calD}^\rho((0,\infty)) = 0} \right). 
\end{equation*}
\end{lemma}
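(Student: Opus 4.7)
The plan is to apply Lemma~\ref{lem:critique} with $F(\mathcal{E}) = \rme^{-\crochet{\mathcal{E},\phi}}$, which transforms the claim into showing that
\[
  A_t := \rho t^{1/2}\,\E\!\left[\rme^{\sqrt 2\rho B_t}\ind{B_t\le 0}\,F_t\right]\xrightarrow[t\to\infty]{} \frac{1}{\sqrt{4\pi}}\,\E[F_\infty],
\]
where $F_t := \rme^{-\crochet{\tilde{\calD}^\rho_t,\phi}}\ind{\tilde{\calD}^\rho_t((0,\infty))=0}$ and $F_\infty$ is the analogous quantity for $\tilde{\calD}^\rho$. The key structural observation is that, since atoms are only added as $t$ grows and $\phi\ge 0$, the $[0,1]$-valued variables $F_t$ are non-increasing in $t$, and by Lemma~\ref{lem:wellDefinition} one has $F_t\searrow F_\infty$ almost surely.

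For the upper bound, fix $T>0$, write $F^T:=F_T$, and let $\calG_T$ be the $\sigma$-algebra generated by $(B_s)_{s\le T}$, the Poisson atoms $\sigma_k\le T$ and the attached branching Brownian motions. Then $F^T$ is $\calG_T$-measurable and $F_t\le F^T$ for $t\ge T$. Conditioning on $\calG_T$ and using the independence of $B_t-B_T$ gives
\[
  \E\!\left[\rme^{\sqrt 2\rho B_t}\ind{B_t\le 0}\mid \calG_T\right] = \rme^{\sqrt 2\rho B_T}\Psi_{T,t}(B_T),\quad \Psi_{T,t}(x):=\int_{-\infty}^{-x}\rme^{\sqrt 2\rho y}\frac{\rme^{-y^2/(2(t-T))}}{\sqrt{2\pi(t-T)}}\,\rmd y.
\]
Dropping the Gaussian factor yields the uniform bound $\rho t^{1/2}\rme^{\sqrt 2\rho x}\Psi_{T,t}(x)\le \sqrt{t/(4\pi(t-T))}$, and a direct computation shows this quantity converges pointwise in $x$ to $1/\sqrt{4\pi}$ as $t\to\infty$. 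Bounded convergence then gives $\lim_t \rho t^{1/2}\E[\rme^{\sqrt 2\rho B_t}\ind{B_t\le 0}F^T]=\E[F^T]/\sqrt{4\pi}$, and letting $T\to\infty$ with $F^T\searrow F_\infty$ produces $\limsup_t A_t\le \E[F_\infty]/\sqrt{4\pi}$.

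For the matching lower bound, set $b^*:=\min(\inf\mathrm{supp}(\phi),0)\le 0$ and let $A_T$ denote the event that no atom $B_{\sigma_k}-\sqrt 2\rho\sigma_k+X^{(k)}_{\sigma_k}(u)$ with $\sigma_k>T$ lies in $[b^*,\infty)$. On $A_T$ one has $F^T=F_\infty=F_t$ for all $t\ge T$, so $F^T-F_t\le\ind{A_T^c}$ and
\[
  A_t\ge \rho t^{1/2}\E\!\left[\rme^{\sqrt 2\rho B_t}\ind{B_t\le 0}F^T\right]-\rho t^{1/2}\E\!\left[\rme^{\sqrt 2\rho B_t}\ind{B_t\le 0}\ind{A_T^c}\right].
\]
The union bound $\ind{A_T^c}\le\sum_{k:\sigma_k>T}\ind{B_{\sigma_k}+M^{(k)}\ge\sqrt 2\rho\sigma_k+b^*}$ combined with Campbell's formula for the Poisson process of intensity $2\,\rmd s$ controls the error by $2\int_T^\infty\E[\rme^{\sqrt 2\rho B_t}\ind{B_t\le 0}\ind{B_s+M_s\ge\sqrt 2\rho s+b^*}]\rmd s$, where $M_s$ is the maximum of an independent branching Brownian motion. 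Splitting at $s=t$: for $s\le t$ one conditions on $(B_s,M_s)$ and re-applies the Gaussian estimate on $B_t-B_s$ to bound the conditional expectation of $\rme^{\sqrt 2\rho B_t}\ind{B_t\le 0}$ by $1/(\rho\sqrt{4\pi(t-s)})$; for $s>t$ one conditions on $(B_s-B_t,M_s)$, both independent of $B_t$, to bound it by $1/(\rho\sqrt{4\pi t})$. The remaining probability $\P(B_s+M_s\ge\sqrt 2\rho s+b^*)$ decays exponentially in $s$ because $\rho>1$: splitting on $\{M_s\le\sqrt 2 s+\epsilon s\}$, the Gaussian tail of $B_s$ handles the first event while the standard large-deviation estimate handles the second, giving a bound $C\rme^{-cs}$. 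Routine integration then yields $\rho t^{1/2}\E[\rme^{\sqrt 2\rho B_t}\ind{B_t\le 0}\ind{A_T^c}]\le C_1\rme^{-cT}+o_{t\to\infty}(1)$, so $\liminf_t A_t\ge \E[F_\infty]/\sqrt{4\pi}$ after sending $T\to\infty$.

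The principal obstacle is this lower bound: the event $A_T^c$ depends on the Brownian spine on $(T,\infty)$ and is therefore correlated with $B_t$, preventing a naive factorisation. The resolution is the Campbell-based first-moment decomposition above, where the exponential decay of $\P(B_s+M_s\ge\sqrt 2\rho s)$ (a consequence of $\rho>1$) is precisely what absorbs the polynomial prefactor $t^{1/2}$ in the two integrals.
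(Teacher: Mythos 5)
Your proof is correct, but it takes a genuinely different route from the paper's. The paper conditions on the endpoint $B_t = x$, rewrites the expectation as an integral $\int_{-\infty}^0\frac{\diffd x}{\sqrt{2\pi t}}\rme^{\sqrt 2\rho x - x^2/2t}\E\big[\cdots\tilde{\calD}^{\rho,x}_t\cdots\big]$, and then for each fixed $x$ proves an almost-sure convergence of $\crochet{\tilde{\calD}^{\rho,x}_t,\phi}$ via a coupling: the conditioned process is realised as a Brownian bridge $\beta^{(t)}_s = B_s + \tfrac{s}{t}(x - B_t)$ built from the same free Brownian motion $B$ that drives $\tilde{\calD}^\rho$, a uniform-in-$t$ tail bound shows only finitely many spine branchings contribute, and dominated convergence in $x$ finishes the proof. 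You instead truncate at a deterministic time $T$: you observe that $F_t := \rme^{-\crochet{\tilde{\calD}^\rho_t,\phi}}\ind{\tilde{\calD}^\rho_t((0,\infty))=0}$ is non-increasing to $F_\infty$, that $F^T := F_T$ is $\calG_T$-measurable, and you exploit the independence of $B_t - B_T$ from $\calG_T$ together with the elementary limit $\rho t^{1/2}\rme^{\sqrt 2\rho x}\Psi_{T,t}(x)\to 1/\sqrt{4\pi}$ uniformly bounded by $\sqrt{t/(4\pi(t-T))}$, to get the upper bound directly; for the lower bound you control $F^T - F_t\le\ind{A_T^c}$ via the union bound and Campbell's formula, using the Gaussian-plus-large-deviation estimate to get exponential decay of the contribution from spine branchings after time $T$. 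This avoids the bridge coupling entirely and replaces the paper's dominated-convergence-in-$x$ step by a first-moment error estimate; the price is the somewhat more intricate split $s\lessgtr t$ needed to absorb the $t^{1/2}$ prefactor. One minor inaccuracy in the write-up: for $s>t$, after conditioning on $(B_s-B_t, M_s)$, the indicator that survives is $\ind{(B_s-B_t)+M_s\ge\sqrt 2\rho s + b^*}$, so the probability you need to bound is $\P((B_s-B_t)+M_s\ge\sqrt 2\rho s + b^*)$ rather than $\P(B_s+M_s\ge\sqrt 2\rho s + b^*)$. The same argument applies verbatim since $\mathrm{Var}(B_s-B_t) = s-t\le s$, so the large-deviation split gives the same uniform exponential bound $C\rme^{-cs}$; the conclusion is unaffected, but the statement should be adjusted.
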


\begin{proof}
Fix $\phi$ as in the lemma and $\rho>1$.
Using Lemma~\ref{lem:critique}, we may write for $t > 0$
\begin{align}
  \rme^{(\rho^2-1)t} \E\left( \rme^{-\crochet{{\cE}^*_t,\phi}} \ind{M_t
\geq \sqrt{2}\rho t} \right) = &\E\Big( \rme^{\sqrt{2} \rho
B_t} \ind{B_t \leq 0}\rme^{-\crochet{\tilde{\calD}^\rho_t,\phi}}
\ind{\tilde{\calD}^\rho_t((0,\infty))=0}\Big).\nonumber
\end{align}
We compute the right hand side by first conditioning on $B_t=x$.
Introducing the point measure $\tilde{\calD}^{\rho,x}_t$ as
$\tilde{\calD}^\rho_t$ conditioned on $\{B_t=x\}$, one gets
\begin{align} 
  \rme^{(\rho^2-1)t} \E\left( \rme^{-\crochet{{\cE}^*_t,\phi}} \ind{M_t
\geq \sqrt{2}\rho t} \right) =
\int_{-\infty}^0\frac{\diffd x}{\sqrt{2\pi t}} \rme^{\sqrt2\rho
x-\frac{x^2}{2t}} \E\Big( \rme^{-\crochet{\tilde{\calD}^{\rho,x}_t,\phi}}
\ind{\tilde{\calD}^{\rho,x}_t((0,\infty))=0}\Big).\nonumber
\end{align}
We are going to show that, for any fixed $x<0$,
\begin{equation}
\lim_{t\to\infty}\E\Big( \rme^{-\crochet{\tilde{\calD}^{\rho,x}_t,\phi}}
\ind{\tilde{\calD}^{\rho,x}_t((0,\infty))=0}\Big)
=\E\Big( \rme^{-\crochet{\tilde{\calD}^{\rho},\phi}}
\ind{\tilde{\calD}^{\rho}((0,\infty))=0}\Big)
\label{limrhox}
\end{equation}
then, the lemma follows by a simple application of the dominated
convergence Theorem.

We shall couple the processes
$\tilde{\calD}^{\rho,x}_t$ and $\tilde{\calD}^{\rho}$ in such a way, that
for any fixed $x\in\R$,
\begin{equation}
 \lim_{t\to\infty} \crochet{\tilde{\calD}^{\rho,x}_t,\phi}
= \crochet{\tilde{\calD}^{\rho},\phi}\quad\text{a.s.}
\label{ascvg}
\end{equation}
Then, this gives \eqref{limrhox} (and the lemma) by dominated convergence.


Fix $x\in\R$, recall that $B$ is the Brownian underlying the
construction of $\tilde{\calD}^{\rho}$ and introduce  for $0\le s\le t$
$$\beta^{(t)}_s := B_s + \frac{s}{t}(x-B_t).$$
It is well-known that $(\beta^{(t)}_s; \, s \in [0,t])$ is a Brownian
bridge from $\beta^{(t)}_0=0$ to $\beta^{(t)}_t=x$.

Almost surely, there exists a random constant $C$ such that
$$|B_s| \le 1+ C s^{0.51}\quad\text{for all $s\ge0$}.$$
Then, with the same constant $C$, one checks that we have the following
uniform bound:
\begin{equation}
|\beta^{(t)}_s| \le 2 + |x| + C s^{0.51}  +s Ct^{-0.49} \quad\text{for all $t\ge0$ and all
$s\in[0,t]$}.
\label{defbeta}
\end{equation}

Recall that $\phi$ has bounded support on the left. Therefore, there exists $a \in \R$ such that $\phi(x)=0$ for all $x<a$. Let us fix $0<\epsilon<\sqrt 2(\rho-1)$. For all $t$ large enough so that $C t^{-0.49}<\epsilon$, observe that 
\[
\beta^{(t)}_s -\sqrt 2 \rho s \le -(\sqrt 2 \rho -\epsilon) s +2+|x| +Cs^{0.51} \quad\text{for  all
$s\in[0,t]$}.
\]
As in the proof of Lemma~\ref{lem:wellDefinition}, since $\sqrt 2 \rho -\epsilon>\sqrt 2$, we conclude   that there exists $T'<\infty$ a.s.\@ such that \emph{uniformly in
$t$}, all the
points in $\tilde{\calD}^{\rho,x}_t$ on the right of $a$ come from
branching events on the spine that occurred at times $\sigma_k\le T'$. 


Therefore, in computing $\crochet{\tilde{\calD}^{\rho,x}_t,\phi}$, one
only needs to consider finitely many points: those that branched from the
spine at a time smaller than $T'$. These points converge, as $t\to\infty$
to the corresponding points in $\tilde{\calD}^{\rho}$ (because
$\beta_s\uppar t\to B_s$ as $t\to\infty$) and, as $\phi$ is
continuous, \eqref{ascvg} holds and the lemma is proved.
\end{proof}

Using that last result, we now prove Theorem~\ref{prop_continuity_decoration_fctn}.
\begin{proof}[Proof of  Theorem~\ref{prop_continuity_decoration_fctn}]
We recall from \eqref{eqn:defineC} that for all $\rho > 1$, we have
\[
  C(\rho) = \rho\lim_{t \to \infty} t^{1/2}\rme^{(\rho^2-1)t} \P(M_t \geq \sqrt{2} \rho t).
\]
Therefore applying Lemma~\ref{lem:computation} with $\phi\equiv 0$, we can rewrite ${C}(\rho)$ as
\begin{equation}
  \label{eqn:alternativeFormula}
  {C}(\rho) = \frac{1}{\sqrt{4\pi}} \P\left( \tilde{\calD}^\rho((0,\infty))=0 \right).
\end{equation}
We deduce from Lemma~\ref{lem:wellDefinition_continuity} that ${C}$ is a continuous function on $[1,\infty)$ such that ${C}(1) = 0$. Additionally, it can be seen from the proof of \cite[Lemma 3.3]{BoH15} that $\lim_{\rho \to \infty} C(\rho) = \frac{1}{\sqrt{4\pi}} =: C(\infty)$, which completes the proof of the first part of Theorem~\ref{prop_continuity_decoration_fctn}.

We now turn to the proof of the second part. We recall that by the definition \eqref{eqn:defineDrho}, given $\mathcal{D}^\rho$ a point process of law $\frakD^\rho$, for all continuous function $\phi$ with compact support, we have
\[
  \E\left( \rme^{-\crochet{\calD^\rho,\phi}} \right) = \lim_{t \to \infty} \E\left( \rme^{-\crochet{{\mathcal E}^*_t,\phi}} \middle| M_t \geq \sqrt{2} \rho t \right).
\]
At the same time, by Lemma~\ref{lem:computation} we get
\begin{equation*}
  \lim_{t \to \infty} \E\left( \rme^{-\crochet{{\mathcal E}^*_t,\phi}} \middle| M_t \geq \sqrt{2} \rho t \right)
  = \lim_{t \to \infty} \frac{ \E\left( \rme^{-\crochet{{\mathcal E}^*_t,\phi}} \ind{M_t \geq \sqrt{2} \rho t} \right)}{\P(M_t \geq \sqrt{2} \rho t)}
  = \frac{\E\left(\rme^{-\crochet{\tilde{\calD}^\rho,\phi}}  \ind{\tilde{\calD}^\rho((0,\infty)) = 0} \right)}{\P(\tilde{\calD}^\rho((0,\infty)) = 0)}.
\end{equation*}
This shows that for all continuous compactly supported function $\phi\, : \bbR \mapsto \bbR_+$
\[
\E\left( \rme^{-\crochet{\calD^\rho,\phi}} \right) = \E\left( \rme^{-\crochet{\tilde{\calD}^\rho,\phi}} \middle| \tilde{\calD}^\rho((0,\infty)) = 0 \right),
\]
proving that $\bbP (\calD^\rho \in \cdot ) = \bbP \big( \tilde{\calD}^\rho \in \cdot \mid \tilde{\calD}^\rho((0,\infty)) = 0\big)$. The weak continuity of $\frakD^\rho$ for $\rho \in(1,\infty)$ follows readily from Lemma~\ref{lem:wellDefinition} and the continuity of $C$. This concludes the proof.
\end{proof}

\subsection{An alternative proof for the first part of Theorem~\ref{prop_continuity_decoration_fctn}}
We sketch here an alternative proof for the representation of $C(\rho)$ in terms of the point processes $\tilde {\mathcal D}^\rho$ defined in \eqref{eqn:tildecalD}. This proof is based on PDE analysis rather than tight probabilistic estimates, and can thus be of independent interest.

Let $M_t$ be the maximum at time $t$ in a branching Brownian motion, and set $u(x,t)=\mathbf P(M_t>x)$ its tail distribution. We recall that $u$ is solution to the Fisher-KPP equation $\partial_t u=\frac12\partial_x^2u +u-u^2$ with step initial condition $u(x,0)=\mathbf{1}_{\{x<0\}}$. We can thus compute $C(\rho)$ from its definition \eqref{eqn:defineC} using the Feynman-Kac representation to evaluate $\mathbf P(M_t>\sqrt2\,\rho t)=u(\sqrt2\,\rho t,t)$.

Recall from Feynman-Kac that, given a function $K(x,t)$, the solution to $\partial_t h=\frac12\partial_x^2 h+K h$ can be written as
\[h(x,t)=\E^x\left[h(B_t,0)\exp\left(\int_0^t\diffd s\,K(B_s,t-s)\right)\right].\]
We apply this not to $u(x,t)$, but to $\partial_xu(x,t)$, the \emph{derivative} of the solution to the Fisher-KPP equation, which is solution to $\partial_t[\partial_x u] =\frac12\partial_x^2[\partial_x u]+(1-2u)[\partial_x u]$ with initial condition $\partial_x u(x,0)=-\delta(x)$. This gives
\begin{align*}
\partial_xu(x,t)&=-\text \rme^t \E^x\Big[\delta(B_t) \text \rme^{-2\int_0^t\diffd s\,u(B_s,t-s)}\Big]\\
                &=-\frac1{\sqrt{2\pi t}}\text \rme^{t-\frac{x^2}{2t}}
	\E^{x\to0}\Big[\text \rme^{-2\int_0^t\diffd s\,u(B_s,t-s)}\Big]
\end{align*}
where in the last expression $B$ is a Brownian bridge from $x$ to 0. We write $B_s=x(1-\frac{s}t)-\tilde B_{t-s}$, so that $\tilde B$ is a Brownian bridge from 0 to 0, we make the change of variable $\tilde s=t-s$ and we drop the tildas:
$$
\partial_xu(x,t)=-\frac1{\sqrt{2\pi t}}\text \rme^{t-\frac{x^2}{2t}}
\E^{0\to0}\Big[\text \rme^{-2\int_0^t\diffd s\,u(x\frac s t -B_s,s)}\Big].
$$
Then, by setting $x=\sqrt2\,\rho t+z$ and integrating over $z>0$, one gets
$$
u(\sqrt2\,\rho t,t)=\frac{\text \rme^{(1-\rho^2)t}}{\sqrt{2\pi
t}}
\int_0^\infty\diffd z\, \text \rme^{-\sqrt2\,\rho z-\frac{z^2}{2t}}
\E^{0\to0}\Big[\text \rme^{-2\int_0^t\diffd s\,u(z\frac s t +\sqrt2\,\rho s-B_s,s)}\Big].
$$
For $\rho>1$, the quantity $u(z\frac s t +\sqrt2\,\rho s-B_s,s)$ goes exponentially fast to 0 as $s\to\infty$, (unless $B$ has wild fluctuations, but these events have a vanishingly small probability). Then, using the fact that $B_s$ (the value at time $s$ of a Brownian bridge over a time $t$) looks, as $t\to\infty$ for fixed $s$, more and more like a Brownian motion at time $s$, it is not very difficult (and akin to what was done in the proof of Lemma~\ref{lem:computation}) to show that
$$
\lim_{t \to \infty} \E^{0\to0}\Big[\text \rme^{-2\int_0^t\diffd s\,u(z\frac s t +\sqrt2\,\rho
s-B_s,s)}\Big] = 
\E^0\Big[\text \rme^{-2\int_0^\infty\diffd s\,u(\sqrt2\,\rho
s-B_s,s)}\Big]\quad\text{for $\rho>1$},
$$
where $B$ on the right hand side is a Brownian motion. In fact, the convergence also holds for $\rho=1$, as one can check that the quantities on either side are then equal to zero. Then, by dominated convergence,
\begin{align*}
&\int_0^\infty\diffd z\, \text \rme^{-\sqrt2\,\rho z-\frac{z^2}{2t}}
\E^{0\to0}\Big[\text \rme^{-2\int_0^t\diffd s\,u(z\frac s t +\sqrt2\,\rho s-B_s,s)}\Big]
\\  & \qquad \qquad \qquad \qquad   \xrightarrow[t\to\infty]{}
\frac1{\sqrt2\,\rho}
\E^0\Big[\text \rme^{-2\int_0^\infty\diffd s\,u(\sqrt2\,\rho
s-B_s,s)}\Big]\quad\text{for $\rho\ge1$}
\end{align*}
and
$$C(\rho)=\frac1{\sqrt{4\pi}}
\E^0\Big[\text \rme^{-2\int_0^\infty\diffd s\,u(\sqrt2\,\rho
s-B_s,s)}\Big].$$

Observe that in the point process \eqref{eqn:tildecalD} the probability that there are no particles on the right of $0$ is then 
\begin{equation*}
  \mathbf P\big(\tilde {\mathcal D}^\rho((0,\infty))=0\big)=\mathbf E\Big[\prod_k
[1-u(\sqrt2\,\rho \sigma_k-B_{\sigma_k},\sigma_k)]\Big]
=\E\Big[\text \rme^{-2\int_0^\infty\diffd s\, u(\sqrt2\,\rho s-B_{s},s)}\Big]
\end{equation*}
and therefore $C(\rho)=\frac1{\sqrt{4\pi}}   \mathbf P\big(\tilde {\mathcal D}^\rho((0,\infty))=0\big)$, as claimed.

\section{Application to branching Ornstein-Uhlenbeck processes}
\label{sec:bou}

As an application of  Theorem \ref{prop_continuity_decoration_fctn}, we study the asymptotic behavior, as $t \to \infty$, of a branching Ornstein-Uhlenbeck process with a pulling parameter that decay to $0$ as $t \to \infty$. The main motivation to study this process is the article of Cortines and Mallein \cite{CoM18}, in which it is conjectured that such a process, when undergoing selection, should exhibit unusual behaviour. In particular, the genealogy of these processes could be given by  Beta coalescents, a family that interpolates between  the Kingman and Bolthausen-Sznitman coalescents. Let us begin by introducing the branching Ornstein-Uhlenbeck process.

An Ornstein-Uhlenbeck process $X$ with spring constant $\mu$ is the solution of the stochastic differential equation
\begin{equation}
  \label{eqn:defineOU}
  \mathrm{d}X^\mu_s = - \mu X^\mu_s \mathrm{d}s + \mathrm{d}B_s,
\end{equation}
where $B$ is a Brownian motion.  It is well-known that Ornstein-Uhlenbeck processes may be represented, if $\mu > 0$, as a space-time scaled Brownian motion: given $W$ a standard Brownian motion, the process defined by
\begin{equation}
  \label{eqn:linkbmandou}
  \forall s \geq 0, \enskip X^\mu_s = X_0 \rme^{-\mu s} + \frac{\rme^{-\mu s}}{\sqrt{2\mu}}  W_{\rme^{2\mu s} - 1},
\end{equation}
is an Ornstein-Uhlenbeck with spring constant $\mu$ and initial condition $X_0$. Equation \eqref{eqn:linkbmandou} shows that, if $\mu > 0$,  the law of $X_s$, conditionally on $\{X_0=x\}$, is $\mathcal{N}(x \rme^{-\mu s}, \frac{1-\rme^{-2\mu s}}{2\mu})$. In particular, $X_s$ is then  strongly recurrent and its invariant measure is $\mathcal{N}(0, \frac{1}{2\mu})$. 

In a branching Ornstein-Uhlenbeck, since the genealogical structure of the process is independent of the motion of the particles, we continue to denote by $\mathcal{N}_t$ the set of particles alive in a branching Ornstein-Uhlenbeck process with spring constant~$\mu$ and we write $(X^\mu_s(u), u \in \mathcal{N}_s)$ for the positions of such particles. It will be convenient to work with a normalized version $\widehat{X}^\mu_s(u)$ of $X^\mu_s(u)$ that has variance $t$ so that things happen on the same scale as for the branching Brownian motion. 
This can be easily obtained by setting
\begin{equation}
  \label{def hat}
  \hat X^\mu_s(u)= \sqrt{\frac{2\mu s }{1-\rme^{-2\mu s}}} \, X^\mu_s(u).
\end{equation}
With this notation, we define the extremal point process: 
\begin{equation}
  \label{eqn:defExtremal}
  \mathcal{E}^\mu_{t} = \sum_{u \in \mathcal{N}_t} \delta_{\widehat{X}^\mu_t(u)  - \sqrt{2} t + \frac{1}{2\sqrt{2}} \log t}.
\end{equation}
Note that here the logarithmic correction is $\frac{1}{2\sqrt{2}}$ instead of $\frac{3}{2\sqrt{2}}$ as in the branching Brownian motion case ($\mu = 0$, see \eqref{BBM}). The aim of this section is to study the asymptotic behaviour of $\mathcal{E}^\mu_t$ as $\mu \to 0$ and $t \to \infty$ simultaneously.

Throughout this section, we will choose the spring constant $\mu$ as depending on the time-horizon $t$ at which we observe the positions of particles, in the sense that $\mu=\mu_t$ is kept fixed for the evolution of the branching process at all times $s\in [0,t]$. For reasons that will become clear later on, one should choose $\mu_t$ such that $\mu_t t \to \gamma\in(0,\infty]$ as $t\to \infty,$ which trivially covers the standard case where $\mu$ is fixed for all $t$'s.

The particular case $\mu_t =\gamma/t$ for some $\gamma \in (0,\infty)$ is a direct application of the results of Bovier and Hartung \cite{BoH15}. Hence we start by recalling their result.


\subsection{Extremal processes of variable speed branching Brownian motions and of branching Ornstein-Uhlenbeck processes}
\label{subsec:bbm}


For each $\sigma_b \in [0,1)$ and $\sigma_e>1$ let  $\mathcal{E}^{\sigma_b,\sigma_e}_\infty $
be a decorated Poisson point process defined as
\begin{equation}\label{def E gamma}
\mathcal{E}^{\sigma_b,\sigma_e}_\infty : = \text{DPPP} ({\sqrt{2}} C(\sigma_e)W_\infty^{\sqrt 2 \sigma_b}\rme^{-\sqrt 2 x} \diffd x , {\sigma_e}   \frakD^{\sigma_e}) ,
\end{equation}
with the parameters of the process being described as follows: Let $(X_t(u), u \in \mathcal{N}_t)$ be a branching Brownian motion and $M_t$ its maximal displacement at time $t$. Then,
\begin{itemize}
\item[-] $W_\infty^{\beta}$ is the limit of the {\it additive martingale}, previously defined in \eqref{eqn:defMartingale}. As  $(W^\beta_t, t \geq 0)$ is a non-negative martingale, it converges a.s.\@ to a limit $W^\beta_\infty$. Moreover, it is well known that a.s.\@ $W^\beta_\infty > 0$ if, and only if, $\beta \in (-\sqrt{2}, \sqrt{2})$. 

\item[-] The function $C$ is the one defined in \eqref{eqn:defineC}.
\item[-] The family of laws $(\frakD^{\rho}, \rho \geq 1)$ is the family of point processes introduced in \eqref{eqn:defineDrho}, and $c  \frakD^\rho$  is the image measure of $\frakD^\rho$ by the application $\mathcal{D} \mapsto \sum_{d_j \in \mathcal{D}} \delta_{c d_j}$, scaling the positions of the atoms by a factor $c$.
\end{itemize}

Let us now introduce the \emph{variable speed branching Brownian motion}. Let $A : [0,1] \to [0,1]$ be a twice differentiable increasing function with $A(0)=0$ and $A(1)=1$. Then, the variable speed branching Brownian motion with variance profile $A$ and time horizon $t$ is defined in the same way as a branching Brownian motion, except that particles move as Brownian motions with time-dependent variance $\sigma^2_t(s)  = A'(s/t)$ where $s \in[0,t]$ is the time of the process. In particular, the position of a particle at time $s$ is  a Gaussian random variable with variance $tA(s/t)$. 

The main result in \cite{BoH15} is the following:
\begin{thm}[Bovier and Hartung \cite{BoH15} Theorem~1.2]
\label{thm:boh}
Assume that the twice differentiable increasing function $A :\, [0,1] \to [0,1]$ satisfies
\begin{enumerate}
  \item $A(0)=0$, $A(1)=1$ and $A(x) < x$ for all $x \in (0,1)$ ;
  \item $\sigma^2_b := A'(0) < 1$ and $\sigma^2_e := A'(1)> 1$.
  \end{enumerate}
Let $(Y_s (u) ; \, u \in \mathcal{N}_s ; \, s \in [0,t] )$ denote the variable speed branching Brownian motion with variance profile~$A$ and 
\[
  \bar{\cE}^A_t = \sum_{u \in \mathcal{N}_t} \delta_{Y_t(u) - \sqrt{2}t + \tfrac{1}{2\sqrt{2}}\log t}
\] 
be its extremal point measure at time $t$. Then
\begin{enumerate}
  \item[(i)] the extremal process $\bar{\cE}^A_t$ converges in law for the topology of the vague convergence to $\mathcal{E}^{\sigma_b,\sigma_e}_\infty $.

  \item[(ii)] the maximal displacement of the process converges in law,  and for all $x \in \R$,
  \[
    \lim_{t \to \infty} \P\left( \max \bar{\cE}^A_t \leq x \right) = \P\left( \max \mathcal{E}^{\sigma_b,\sigma_e}_\infty  \leq x \right).
  \]
\end{enumerate}
\end{thm}


This Theorem is the basis for obtaining a similar result for  branching Ornstein-Uhlenbeck processes. More precisely, we will see that the case $\mu_t=\gamma/t$ is a direct consequence and that more generally the case $t\mu_t\to \gamma$ as $t\to \infty$ cane be deduced through comparison arguments.
For each $\gamma>0$, we define two constants, $c_\gamma$ and $d_\gamma$ by 
\begin{equation}
  \label{eqn:defcgammaanddgamma}
  c_\gamma := \sqrt{\frac{2\gamma}{\rme^{2\gamma}-1}} \quad \text{and}\quad d_\gamma :=\sqrt{\frac{2\gamma}{1-\rme^{-2\gamma}}}.
\end{equation}
Now for $\gamma >0$ let 
\begin{equation}
  \label{eqn:defnouveauEgamma}
  \mathcal{E}^\gamma_\infty := \mathcal{E}^{c_\gamma,d_\gamma}_\infty.
\end{equation}

In the $\gamma = \infty$ case, we set $ c_\infty=0$ and $d_\infty = \infty$, thus $W_\infty^{\sqrt{2}c_\infty} = W_\infty^{0}$ is an exponential random variable with mean $1$, the limit of the martingale associated to the Yule process $(\#\mathcal{N}_t, t \geq 0)$. 
As is shown in \cite{BoH15} (see also Theorem \ref{prop_continuity_decoration_fctn}),  $C(d_\infty) = C(\infty) =\frac{1}{\sqrt{4\pi}}$ and a point measure drawn from $\mathfrak{D}^{d_\infty} = \mathfrak{D}^{\infty}$ is a.s.\@ $\delta_0$.

We prove the following result in the rest of the section.
\begin{theorem}
\label{thm:main}
Assume that $\lim_{t \to \infty} t \mu_t = \gamma \in (0,\infty]$, then, with the above notations, we have that
\[
  \lim_{t \to \infty} \left(\mathcal{E}^{\mu_t}_{t}, \max \mathcal{E}^{\mu_t}_t\right) = \left(\mathcal{E}^\gamma_\infty, \max \mathcal{E}^\gamma_\infty \right) \quad \text{ jointly in law,}
\]
where the convergence of the point process is in the sense of the topology of vague convergence.
\end{theorem}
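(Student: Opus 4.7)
To prove Theorem \ref{thm:main}, I plan to establish vague convergence of $\mathcal{E}^{\mu_t}_t$ by showing that the Laplace functional
\[
  L_t(\phi) := \E\!\left[\exp\!\big(-\crochet{\mathcal{E}^{\mu_t}_t, \phi}\big)\right]
\]
converges to the Laplace functional of $\mathcal{E}^\gamma_\infty$ for every nonnegative continuous test function $\phi$ vanishing on $(-\infty, -A]$ with $A > 0$; the joint convergence with $\max \mathcal{E}^{\mu_t}_t$ will follow by an additional lower-tail/tightness argument on the maximum.

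The technical backbone is a time decomposition at an intermediate instant $t - s_t$, where $s_t \to \infty$, $s_t = o(t)$, and in particular $\mu_t s_t \to 0$. Conditioning on $\mathcal{F}_{t - s_t}$ and applying the branching Markov property, $\mathcal{E}^{\mu_t}_t$ decomposes as a superposition of independent point measures, one per ancestor $u \in \mathcal{N}_{t - s_t}$. Because $\mu_t s_t \to 0$, the OU dynamics in the final window $[t - s_t, t]$ is a small perturbation of a Brownian increment, so that, up to a deterministic multiplicative shift of the starting position by $\rme^{-\mu_t s_t}$, the point measure generated by $u$ is asymptotically the extremal point measure of a standard BBM started at $X^{\mu_t}_{t - s_t}(u)$ and run for time $s_t$. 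The one-particle large-deviation asymptotics \eqref{eqn:defineC} and the decoration convergence \eqref{eqn:defineDrho} for BBM then become available.

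Matching scalings determines which ancestors contribute. To reach $\hat{X}^{\mu_t}_t \approx \sqrt{2}\, t - \frac{1}{2\sqrt{2}} \log t$ from an ancestor at time $t - s_t$, that ancestor must sit at a $\hat{X}^{\mu_t}_{t - s_t}$-value that is atypically high by an amount of order $s_t$, and its BBM subtree over $[t - s_t, t]$ must realize an effective velocity $d_\gamma$. The probability of the latter is $C(d_\gamma)\, s_t^{-1/2} \rme^{-(d_\gamma^2 - 1) s_t}$, and conditional on this rare event, the centred BBM extreme converges in $X$-coordinates to $\mathfrak{D}^{d_\gamma}$. Converting from $X$ to $\hat{X}$ coordinates multiplies atom positions by $d_\gamma$, producing the claimed decoration $d_\gamma\, \mathfrak{D}^{d_\gamma}$.

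Summing these contributions over $u \in \mathcal{N}_{t - s_t}$ via a many-to-one identity, and exploiting the representation \eqref{eqn:linkbmandou} of the OU backbone as a time-changed Brownian motion, one sees that the relevant intensity has the form $\rme^{-\sqrt{2} x}$ times an observable of the BOU which, after the time change $s \mapsto \rme^{2 \mu_t s} - 1$, coincides with a BBM additive martingale of parameter $\sqrt{2}\, c_\gamma$; since $\sqrt{2}\, c_\gamma \in (0, \sqrt{2})$ for $\gamma \in (0, \infty)$, this converges almost surely to $W_\infty^{\sqrt{2} c_\gamma}$. Combined with the rare-event constant and the Jacobian of the coordinate change, one recovers the full intensity $\sqrt{2}\, C(d_\gamma)\, W_\infty^{\sqrt{2} c_\gamma}\, \rme^{-\sqrt{2} x}\, \diffd x$. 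The main obstacle is precisely this last step: pinning down the correct multiplicative constants and promoting convergence of expectations into a genuine Poisson limit. This will require (i) a spine or change-of-measure argument along the OU trajectory to identify the additive-martingale limit, (ii) second-moment or Poisson-approximation control to establish asymptotic independence of distinct ancestors' contributions, and (iii) truncation estimates to rule out stray contributions from ancestors outside the large-deviation window. The case $\gamma = \infty$ is a degenerate limit: $\mu_t^{-1} = o(t)$ forces the OU to mix on a timescale much shorter than $t$, distinct ancestors decorrelate, and the extremal problem reduces to that of $\rme^t$ weakly correlated Gaussians of variance $(2\mu_t)^{-1}$, yielding the intensity $\sqrt{2}\,(4\pi)^{-1/2}\, W_\infty^{0}\, \rme^{-\sqrt{2} x}\, \diffd x$ and the trivial decoration $\delta_0$.
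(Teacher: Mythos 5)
Your approach is genuinely different from the paper's. You are proposing a direct, self-contained proof of the DPPP convergence via a Bovier--Hartung-type time decomposition at an intermediate scale $t-s_t$, together with spine/change-of-measure, moment bounds, and truncation. The paper does essentially none of this. Instead, it observes that when $\mu_t=\gamma/t$ exactly, the change of variable $Y_s(u)=\sqrt{2\gamma/(\rme^{2\gamma}-1)}\,\rme^{\gamma s/t}X_s^{\gamma/t}(u)$ turns the BOU into a variable-speed BBM with profile $A(x)=\tfrac{\rme^{2\gamma x}-1}{\rme^{2\gamma}-1}$, so the full DPPP statement follows by directly quoting Bovier--Hartung's theorem for that model. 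The general hypothesis $t\mu_t\to\gamma$ is then handled by a comparison argument: since the normalized covariance $t\,\tfrac{\rme^{2\mu\tau}-1}{\rme^{2\mu t}-1}$ is monotone decreasing in $\mu$, Kahane's extension of Slepian's lemma gives monotonicity in $\mu$ of the Laplace functionals $\E\exp(-\crochet{\cE_t^\mu,\phi})$ for non-decreasing $\phi$. Sandwiching $\mu_t$ between $\underline\gamma/t$ and $\overline\gamma/t$ and letting $\underline\gamma\uparrow\gamma$, $\overline\gamma\downarrow\gamma$ reduces everything to the continuity in $\gamma$ of the limiting law, which the paper establishes via the spine representation of $C(\rho)$ and $\mathfrak{D}^\rho$ (Theorem~\ref{prop_continuity_decoration_fctn}). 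The paper's route buys you an economical proof that never touches BOU moment estimates for general $\gamma$; your route, if completed, would give an independent and more hands-on proof.

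That said, your sketch has one concrete gap worth flagging. You invoke the representation \eqref{eqn:linkbmandou} and a time change $s\mapsto\rme^{2\mu_t s}-1$ to identify a BBM additive martingale of parameter $\sqrt{2}c_\gamma$. But this time change does not leave the \emph{branching} structure alone: under $\tau=\rme^{2\mu s}-1$, the constant branching rate $1$ in $s$ becomes the $\tau$-dependent rate $\tfrac{1}{2\mu(1+\tau)}$, so the time-changed object is not a standard BBM and its additive martingale is not $W^{\sqrt{2}c_\gamma}$ in the usual sense. The paper sidesteps this precisely by using the variable-speed BBM reduction \eqref{eq_definition_variable_speed_OU}, which rescales space (not time), keeps the branching rate constant, and produces a \emph{standard} BBM additive martingale because $A'(0)=c_\gamma^2$. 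You would need either to mimic that reduction at the backbone stage (which only works cleanly when $\mu_t=\gamma/t$ exactly, hence the paper's need for the comparison argument to cover general $\mu_t$), or to work directly with the BOU additive martingale $\sum_u\rme^{\beta\hat X_{t-s_t}^{\mu_t}(u)-\cdots}$ and prove its convergence to $W_\infty^{\sqrt2 c_\gamma}$ as $t\to\infty$ and $s_t/t\to0$ — a nontrivial statement not contained in the references you would lean on. The other steps you flag as needed, (i)--(iii), are genuine and substantial; this is exactly the analysis that Bovier--Hartung carry out, so the path is viable but far from free.
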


\begin{remark}
We prove in the forthcoming Lemma~\ref{lem:identificationLimit} that the convergence in law of a random point measure (for the topology of vague convergence) jointly with that of its maximum is equivalent to the convergence in law of $\crochet{\mathcal{P}_t,\phi}$ to $\crochet{\mathcal{P},\phi}$ for all  continuous functions~$\phi$ with support bounded from the left. This notion of convergence forms a thinner topology on the space of point measures.
\end{remark}

\begin{remark}
In the simplest case where $\mu_t =\mu $ is a constant, the theorem with \eqref{def hat} and \eqref{eqn:defExtremal} implies the following behaviour for the non-normalised positions $X_t^\mu (u)$: the position of the rightmost particle is almost surely given by
\[
\max_{u \in \mathcal N_t} X^\mu_t(u) =\sqrt{\frac{t}{\mu}} - \frac{\log t }{4\sqrt{\mu  t}}+O(t^{-1/2}), 
\] 
and the next particles are at distance of order $t^{-1/2}$ from the rightmost.
\end{remark}

We shall call the case $t \mu_t \to \infty $ the \emph{uncorrelated case}, because the extremal particles have the same distribution as the extremal particles of an i.i.d.\@ sample of Gaussian random variables. Indeed, in this regime, the dilation factor $\sqrt{2\mu_t t /(1-\rme^{-2\mu_t t})}$ diverges as $t \to \infty$, which prevents the existence of local correlations (decorations) in the limiting picture.

\subsection{The \texorpdfstring{$\mu_t=\gamma/t$}{exact connection to OU} case}

We start with the proof in the case  $\mu_t=\gamma/t$, since it is a direct application of Theorem \ref{thm:boh}.

\begin{proof}[Proof of Theorem~\ref{thm:main} in the $\mu_t=\gamma/t$ case]
Recall from \eqref{eqn:linkbmandou} that, an Ornstein-Uhlenbeck $X^{\mu_t}_s$ at time $s$ with spring-constant $\mu_t=\gamma/t$ started from 0 can be written as 
\[
X^{\mu_t}_s= \frac{\rme^{-\gamma s/t}}{\sqrt{2\gamma/t}} W_{\rme^{2\gamma s/t} -1}.
\]

For any $u \in \mathcal{N}_t$, we define 
$\big(Y_s(u),  \, s \in [0,t] \big)$ by 
\begin{equation}\label{eq_definition_variable_speed_OU}
  Y_s(u) = \sqrt{\frac{2\gamma}{\rme^{2\gamma}-1}} \rme^{\gamma s/t} X^{\gamma/t}_s(u).
\end{equation}
Clearly, $Y_s(u)$ has variance $t \frac{\rme^{2\gamma s/t}-1}{\rme^{2\gamma}-1}$. It is easily checked that the whole process $(Y_s(u), s \leq t)_{u \in \calN_t}$ is then a variable speed branching Brownian motion, with variance profile $A(x):=\frac{\rme^{2\gamma x}-1}{\rme^{2 \gamma}-1}$, which is a function satisfying the assumptions of Theorem~\ref{thm:boh} with
\[
\sigma_b^2 = A'(0) = \frac{2\gamma}{\rme^{2\gamma}-1} = c_\gamma^2,
\quad  \sigma_e^2 = A'(1)= \frac{2\gamma}{1-\rme^{-2\gamma}} = d_\gamma^2.
\]

Therefore, the extremal point process $\sum_{u \in \mathcal{N}_t} \delta_{Y_t(u)- \sqrt{2}t + \frac{1}{2\sqrt{2}}\log t}$ converges in distribution as $t\to \infty$ to a
\[\mathrm{DPPP}( {\sqrt{2}} C(\sigma_e) W_\infty^{\sqrt{2}\sigma_b}\rme^{-\sqrt{2}x}\rmd x, \,  \sigma_e \mathfrak{D}^{\sigma_e} ),\]
and the maximal atom converges as well.  Since $Y_t(u)=\hat X^{\gamma/t}_t(u)$ by \eqref{def hat}, and using the forthcoming Lemma~\ref{lem:identificationLimit}, we conclude in the joint convergence $(\cE^{\gamma/t}_t, \max \cE^{\gamma/t}_t)$ toward $(\cE^\gamma_\infty,\max \cE^\gamma_\infty)$ in law, completing the proof of Theorem~\ref{thm:main} when $\mu_t = \gamma/t$. 
\end{proof}


\subsection{Comparison of extremal processes of branching Ornstein-Uhlenbeck processes with different spring constants}
\label{subsec:extremes}

We use here Slepian-type computations to compare the extremal measures of Gaussian processes with different correlation structures. We begin with a general result on the joint convergence of point measures and their largest atom.

\begin{lemma}
\label{lem:identificationLimit}
Let $(\mathcal{P}_t, \mathcal{P}_\infty)$ be point processes on $\R$ such
that $\mathcal{P}_\infty((0,\infty))<\infty$ a.s.\@ The four following
statements are equivalent: as $t\to\infty$, 
\begin{enumerate}
  \item[(i)] $ (\mathcal{P}_t, \max \mathcal{P}_t) \rightarrow_d
(\mathcal{P}_\infty,\max \mathcal{P}_\infty)$ jointly;
  \item[(ii)] $\mathcal{P}_t \rightarrow_d \mathcal{P}_\infty$
and $\max \mathcal{P}_t \to_d \max \mathcal{P}_\infty$;
  \item[(iii)] 
	$ \E\big(\rme^{ - \crochet{\mathcal{P}_t, \phi} }\big) 
        \rightarrow   \E\big(\rme^{ - \crochet{\mathcal{P}_\infty,\phi}}\big)$
 for all continuous function $\phi$ with support bounded from the left.
\item[(iv)] $ \E\big(\rme^{ - \crochet{\mathcal{P}_t, \phi} }\big) 
        \rightarrow   \E\big(\rme^{ - \crochet{\mathcal{P}_\infty,\phi}}\big)$ for all $\mathcal{C}^\infty$ non-decreasing function $\phi$ with support bounded from the left and such that for some $a\in\R$, $\phi(x)$ is constant for $x>a$.
\end{enumerate}
\end{lemma}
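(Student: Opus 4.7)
The strategy is to establish the cycle of implications $(i) \Rightarrow (ii) \Rightarrow (iii) \Rightarrow (iv) \Rightarrow (i)$. The implications $(i) \Rightarrow (ii)$ and $(iii) \Rightarrow (iv)$ are immediate: the former by continuity of the projection maps on the product space, and the latter because the test functions in (iv) form a subclass of those in (iii).

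For $(ii) \Rightarrow (iii)$, I would invoke Skorokhod's representation theorem to realise $\mathcal{P}_t$ and $\mathcal{P}_\infty$ on a common probability space with $\mathcal{P}_t \to \mathcal{P}_\infty$ vaguely and $\max \mathcal{P}_t \to \max \mathcal{P}_\infty$ almost surely. Given $\phi \geq 0$ continuous with support in $[b, \infty)$, the assumption $\mathcal{P}_\infty((0,\infty)) < \infty$ a.s.\@ combined with the convergence of the maxima allows one to pick an $\omega$-dependent $M$ so that $\max \mathcal{P}_t < M$ for all $t$ eventually. Introducing a smooth compactly supported cutoff $\chi_M$ equal to $1$ on $[b, M]$ and vanishing outside $[b, M+1]$, one has $\langle \mathcal{P}_t, \phi\rangle = \langle \mathcal{P}_t, \phi\chi_M\rangle$ for $t$ large. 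Vague convergence yields the a.s.\@ convergence of the cutoff integral, and bounded convergence applied to $e^{-\langle \mathcal{P}_t, \phi\rangle} \in [0,1]$ delivers the convergence of Laplace functionals.

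The substantive step is $(iv) \Rightarrow (i)$, for which two observations are key. First, the class in (iv) is closed under non-negative linear combinations, so for any finite family $\phi_1, \ldots, \phi_k$ in class (iv) the hypothesis yields the convergence of the joint Laplace transform $\E[\exp(-\sum_i \lambda_i \langle \mathcal{P}_t, \phi_i\rangle)]$ for all $\lambda_i \geq 0$; by Laplace continuity for non-negative random vectors, this is equivalent to joint convergence in law of $(\langle \mathcal{P}_t, \phi_i\rangle)_i$. Second, any smooth compactly supported $\psi$ admits a decomposition $\psi = \phi_1 - \phi_2$ with both $\phi_i$ in class (iv), obtained by integrating the positive and negative parts of $\psi'$ (and mollifying if needed). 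Combining these, $\langle \mathcal{P}_t, \psi\rangle$ converges in law for every smooth compactly supported $\psi$, which after a routine smooth approximation gives vague convergence of $\mathcal{P}_t$.

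To upgrade this to joint convergence with the maximum, I would apply the first observation to the triple $(\phi_1, \phi_2, g_{a,\epsilon})$, where $g_{a,\epsilon}$ is a smoothed indicator of $[a, \infty)$ (vanishing on $(-\infty, a-\epsilon]$, equal to $1$ on $[a, \infty)$) and therefore lies in class (iv). This gives joint convergence in law of $(\langle \mathcal{P}_t, \psi\rangle, \langle \mathcal{P}_t, g_{a,\epsilon}\rangle)$. Applying the bounded continuous map $(x, y) \mapsto e^{-x-cy}$ and then letting $c \to \infty$ via monotone convergence yields the convergence of $\E[e^{-\langle \mathcal{P}_t, \psi\rangle} \mathbf{1}_{\{\max \mathcal{P}_t \leq a - \epsilon\}}]$ at continuity points $a - \epsilon$ of the law of $\max \mathcal{P}_\infty$, from which joint convergence $(\mathcal{P}_t, \max \mathcal{P}_t) \to_d (\mathcal{P}_\infty, \max \mathcal{P}_\infty)$ follows by a standard Portmanteau-type characterization. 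The principal obstacle is this $c \to \infty$ limit exchange, which relies on monotonicity in $c$ together with the fact that the law of $\max \mathcal{P}_\infty$ has only countably many atoms, so that continuity points are dense.
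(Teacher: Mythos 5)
Your overall architecture --- the cycle $(i)\Rightarrow(ii)\Rightarrow(iii)\Rightarrow(iv)\Rightarrow(i)$, with the first and third arrows trivial --- is the same as the paper's, and your treatment of $(iv)\Rightarrow(i)$ is in the same spirit as the paper's (testing against smoothed indicators, using Laplace continuity for non-negative random vectors, then sending the extra Laplace parameter to infinity to capture the maximum). However, your $(ii)\Rightarrow(iii)$ step has a genuine gap.

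The issue is the Skorokhod application. Hypothesis $(ii)$ gives two \emph{marginal} convergences in law: $\mathcal{P}_t\to_d\mathcal{P}_\infty$ (for the vague topology) and $\max\mathcal{P}_t\to_d\max\mathcal{P}_\infty$ (on $\overline\R$). Skorokhod's representation theorem applies to a \emph{single} convergence in law in a Polish space; applied to the vague convergence of $\mathcal{P}_t$, it produces a coupling with $\mathcal{P}'_t\to\mathcal{P}'_\infty$ vaguely a.s., but this does \emph{not} give $\max\mathcal{P}'_t\to\max\mathcal{P}'_\infty$ a.s., because the map $\nu\mapsto\max\nu$ is not continuous for the vague topology (atoms can escape to $+\infty$ without affecting vague limits). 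To force both convergences to hold a.s.\@ on a common space you would need convergence of the \emph{pair} $(\mathcal{P}_t,\max\mathcal{P}_t)$ in law, i.e.\@ statement $(i)$ --- which is precisely what you do not yet have at this point of the cycle. As written, your claim that one can ``pick an $\omega$-dependent $M$ so that $\max\mathcal{P}_t<M$ for all $t$ eventually'' assumes a.s.\@ boundedness of $\max\mathcal{P}_t$ that is not provided by the hypotheses.

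The fix is to stay at the level of distributions and use tightness instead of pathwise boundedness: for a non-negative $\phi$ with support bounded from the left and a truncation $\phi^A$ agreeing with $\phi$ on $(-\infty,A]$ and vanishing beyond $A+1$, one has the deterministic bound
\[
\Big|\E\big(\rme^{-\crochet{\mathcal P_t,\phi}}\big)-\E\big(\rme^{-\crochet{\mathcal P_t,\phi^A}}\big)\Big|\le\P\big(\max\mathcal P_t\ge A\big),
\]
and the right-hand side can be made uniformly small in $t$ using the convergence in law (hence tightness) of $\max\mathcal{P}_t$. Then $\phi^A$ is compactly supported, so its Laplace functional converges by the vague convergence of $\mathcal{P}_t$, and a triangle inequality finishes the argument. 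This is essentially your idea once the faulty Skorokhod coupling is replaced by the tightness bound. (For general signed $\phi$, the paper splits into $\phi_\pm=\max(\pm\phi,0)$ and uses joint Laplace convergence; your decomposition of a compactly supported $\psi$ via $\int(\psi')_\pm$ in $(iv)\Rightarrow(i)$ is fine modulo the minor point that the resulting primitives are only $C^1$, not $C^\infty$; this is harmless if ``smooth'' in $(iv)$ is read as $C^1$ or after one more approximation step.)
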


The proof of this lemma being rather classical and straightforward, we postpone it to the appendix. A consequence of the above lemma is that to prove Theorem \ref{thm:main}, it is enough to prove the convergence in distribution of random variables of the form $\crochet{\mathcal{E}^{\mu_t},\phi}$, where $\phi$ is a generic non-decreasing bounded function with support bounded from the left.

We now recall that Kahane's theorem is a more general version of Slepian's lemma that allows to compare Gaussian processes with different variances. We refer to \cite[Chapter 3.1]{Bov} for a self-contained proof of Kahane's Theorem. 
\begin{thm}[Kahane's Theorem~\cite{Kah}]
\label{thm:kah}
Let $(X_j, j \leq n)$, $(Y_j, j \leq n)$ be two centred Gaussian vectors. 
Let $F$ be a twice differentiable function on $\R^n$ with bounded second derivatives, that satisfies 
\begin{align*}
  &\frac{\partial^2 F}{\partial x_i \partial x_j}(x) \geq 0 \quad \text{if}  \quad \E(X_i X_j)> \E(Y_iY_j)\\
   \text{ and } \quad&
  \frac{\partial^2 F}{\partial x_i \partial x_j}(x) \leq 0 \quad \text{if} \quad \E(X_i X_j) < \E(Y_iY_j).
\end{align*}
Then we have $\E(F(X)) \geq \E(F(Y))$.
\end{thm}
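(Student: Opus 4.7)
The plan is to use the classical \emph{Gaussian interpolation} argument. Without loss of generality we may assume $X$ and $Y$ are defined on the same probability space and are independent (otherwise work on a product space). Introduce the centred Gaussian interpolation
\[
  Z(t) := \sqrt{1-t}\, Y + \sqrt{t}\, X, \qquad t \in [0,1],
\]
so that $Z(0) = Y$, $Z(1) = X$, and $\E[Z(t)_i Z(t)_j] = (1-t)\,\E[Y_iY_j] + t\,\E[X_iX_j]$. The theorem will follow once I show that $t \mapsto \E[F(Z(t))]$ is non-decreasing on $[0,1]$.

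The main step is to differentiate $\E[F(Z(t))]$ for $t \in (0,1)$. By the chain rule,
\[
  \frac{d}{dt}\E[F(Z(t))] = \sum_i \E\!\left[\partial_i F(Z(t))\,\Bigl(\tfrac{X_i}{2\sqrt{t}} - \tfrac{Y_i}{2\sqrt{1-t}}\Bigr)\right].
\]
The key tool is \emph{Gaussian integration by parts} (Stein's lemma): for a centred scalar Gaussian $U$ jointly Gaussian with a centred vector $V$, and for smooth $G$ with polynomially bounded derivatives, one has $\E[U\,G(V)] = \sum_k \E[U V_k]\,\E[\partial_k G(V)]$. Applying this with $U = X_i$ (resp.\ $U = Y_i$) and $V = Z(t)$, and using the independence of $X$ and $Y$ to compute $\E[X_i Z(t)_k] = \sqrt{t}\,\E[X_iX_k]$ and $\E[Y_i Z(t)_k] = \sqrt{1-t}\,\E[Y_iY_k]$, the apparent singularities $1/\sqrt{t}$ and $1/\sqrt{1-t}$ cancel cleanly and one obtains the identity
\[
  \frac{d}{dt}\E[F(Z(t))] = \tfrac12 \sum_{i,j}\bigl(\E[X_iX_j] - \E[Y_iY_j]\bigr)\,\E\!\left[\partial^2_{ij}F(Z(t))\right].
\]

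Finally I would apply the sign hypothesis on $F$: whenever $\E[X_iX_j] > \E[Y_iY_j]$ the assumption gives $\partial^2_{ij}F(x) \geq 0$ pointwise in $x$, and symmetrically for the reverse inequality, while indices where the two covariances agree contribute zero. Thus every summand in the above identity is non-negative, so the derivative is non-negative on $(0,1)$, and integrating from $0$ to $1$ gives $\E[F(X)] \geq \E[F(Y)]$.

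The only delicate point, and the place where the hypothesis of \emph{bounded} second derivatives is really used, is the rigorous justification of differentiation under the expectation and of the Gaussian integration by parts near the endpoints $t = 0$ and $t = 1$, where $Z'(t)$ blows up. After the cancellation described above, the resulting integrand for $\frac{d}{dt}\E[F(Z(t))]$ is uniformly bounded on $(0,1)$, and continuity at the endpoints follows by dominated convergence; this is the only technical hurdle in the argument.
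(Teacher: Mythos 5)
The paper does not actually prove Theorem~\ref{thm:kah}: it is quoted as a known result of Kahane \cite{Kah}, with a pointer to \cite[Chapter~3.1]{Bov} for a proof. What you have written is precisely the standard Gaussian interpolation argument that those references use, and it is correct. The identity
\[
  \frac{\dd}{\dd t}\,\E\bigl[F(Z(t))\bigr] \;=\; \tfrac12 \sum_{i,j}\bigl(\E[X_iX_j]-\E[Y_iY_j]\bigr)\,\E\!\bigl[\partial^2_{ij}F(Z(t))\bigr]
\]
follows exactly as you describe from Stein's integration by parts on the independent copies, and the sign hypothesis on $\partial^2_{ij}F$ (which, as stated, also covers the diagonal terms $i=j$, so no assumption of equal variances is needed) makes each summand nonnegative. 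You also correctly identify where the boundedness of the second derivatives is used: it makes the right-hand side above uniformly bounded on $(0,1)$, so one can justify the differentiation on any $[\epsilon,1-\epsilon]$, conclude $\E[F(Z(1-\epsilon))]\ge\E[F(Z(\epsilon))]$, and then pass to the limit $\epsilon\to0$ by continuity of $t\mapsto\E[F(Z(t))]$. In short, your proof is a faithful reconstruction of the argument the paper delegates to \cite{Kah,Bov}; there is no competing proof in the paper to compare against.
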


From  Kahane's Theorem~\ref{thm:kah}, we obtain Lemma~\ref{lem:extremal} below, which is useful when comparing the Laplace transform of the extremal point measures of branching Ornstein-Uhlenbeck processes with different spring constants. 

\begin{lemma}
\label{lem:extremal}
Let $\phi : \R \to \R$ be a continuous non-negative non-decreasing function. Then, for all $\mu \leq \nu \leq \infty$ and $t > 0$, we have
\[
  \E\left( \exp\left( - \crochet{\phi,\cE^\mu_t} \right) \right) \geq \E\left( \exp\left( -\crochet{\phi,\cE^\nu_t} \right) \right) ,
\]
where $\cE^\mu_t$ and $\cE^\nu_t$ are the normalized, centred extremal point measures of branching Ornstein-Uhlenbeck processes as defined in \eqref{eqn:defExtremal} when $\nu<\infty$, and $\cE^\infty_t$ is the point measure defined as
\[
  \cE^\infty_t = \sum_{u \in \mathcal{N}_t} \delta_{\hat{X}^\infty_t(u) - \sqrt{2} t + \frac{1}{2\sqrt{2}} \log t},
\]
where $(\hat{X}^\infty_t(u), u \in \mathcal{N}_t)$ is a family of i.i.d.\@ centred Gaussian random variables with variance $t$.
\end{lemma}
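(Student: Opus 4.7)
The plan is to apply Kahane's Theorem~\ref{thm:kah} conditionally on the genealogical tree $\mathcal T$ of the branching process. Conditionally on $\mathcal T$, the vector $(\hat X^\mu_t(u))_{u \in \mathcal N_t}$ is a centred Gaussian, and its covariance structure depends monotonically on~$\mu$, which is precisely the kind of input Kahane's comparison theorem digests.

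The first step is the covariance computation. For two distinct particles $u, v \in \mathcal N_t$ whose most recent common ancestor sits at time $s = s(u,v) \in (0,t)$, the OU representation \eqref{eqn:linkbmandou} together with the normalisation \eqref{def hat} yields
\[
\Cov\bigl(\hat X^\mu_t(u), \hat X^\mu_t(v) \bigm\vert \mathcal T\bigr) = t \cdot \frac{e^{2\mu s} - 1}{e^{2\mu t} - 1}, \qquad \Var\bigl(\hat X^\mu_t(u) \bigm\vert \mathcal T\bigr) = t.
\]
Setting $h(x) := x/(1 - e^{-x})$, a quick check shows $h$ is strictly increasing on $(0,\infty)$, and differentiating gives $\partial_\mu \log\!\frac{e^{2\mu s} - 1}{e^{2\mu t} - 1} = \mu^{-1}(h(2\mu s) - h(2\mu t)) < 0$ whenever $s < t$. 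Hence the off-diagonal covariances are strictly decreasing in $\mu$ and vanish as $\mu \to \infty$. In particular, for $\mu \le \nu < \infty$ the conditional covariance matrix of $(\hat X^\mu_t(u))$ dominates that of $(\hat X^\nu_t(u))$ entrywise, with equality on the diagonal; the case $\nu = \infty$ follows from the same comparison since $\Cov(\hat X^\infty_t(u), \hat X^\infty_t(v)) = t \, \delta_{u,v}$.

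Next I will apply Kahane's theorem. Assume first that $\phi$ is smooth with bounded first and second derivatives. Writing $c_t := \sqrt 2\, t - \tfrac1{2\sqrt 2}\log t$ and enumerating the atoms of $\mathcal N_t$ as $u_1, \dots, u_N$ (after conditioning on $\mathcal T$), set
\[
F(x_1, \dots, x_N) := \exp\!\Bigl(-\sum_{i=1}^N \phi(x_i - c_t)\Bigr).
\]
Then $F$ is bounded by $1$, and the mixed partials
\[
\frac{\partial^2 F}{\partial x_i \partial x_j}(x) = \phi'(x_i - c_t)\,\phi'(x_j - c_t)\, F(x) \ge 0 \qquad (i \neq j)
\]
are non-negative because $\phi$ is non-decreasing. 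Since the diagonal covariances coincide (so no sign condition on $\partial_i^2 F$ is needed), Kahane's Theorem applied conditionally on $\mathcal T$ gives $\E[F(\hat X^\mu_t) \mid \mathcal T] \ge \E[F(\hat X^\nu_t) \mid \mathcal T]$, and the inequality claimed in the lemma follows by taking expectation over $\mathcal T$, provided $\phi$ is smooth with bounded derivatives.

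To remove the regularity assumption, I would approximate a general continuous non-decreasing $\phi \ge 0$ by $\phi_n := (\phi \wedge n) * \rho_{1/n}$, with $\rho_\varepsilon$ a non-negative smooth mollifier. Each $\phi_n$ is smooth, non-decreasing, bounded by $n$, with bounded derivatives of all orders, and $\phi_n \to \phi$ pointwise. Since $\cE^\mu_t$ and $\cE^\nu_t$ have a.s.\@ finitely many atoms, $\crochet{\phi_n, \cE^{\cdot}_t} \to \crochet{\phi, \cE^{\cdot}_t}$ a.s., and dominated convergence (the Laplace integrands lie in $[0,1]$) lets one pass to the limit in the Kahane inequality. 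The main technical point is the monotonicity of the covariances in $\mu$ established in the first step; once that is in place, Kahane's theorem performs the comparison and the mollification is routine.
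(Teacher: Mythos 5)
Your proof is correct and follows essentially the same route as the paper: condition on the genealogical tree, compute the conditional covariance $t\,\frac{e^{2\mu\tau_{u,v}}-1}{e^{2\mu t}-1}$, show it is monotone decreasing in $\mu$, apply Kahane's comparison theorem to $F(x)=\exp(-\sum_i\phi(x_i-c_t))$ for smooth $\phi$, then pass to a general continuous non-decreasing $\phi$ by approximation. The only (cosmetic) difference is at the final approximation step: you mollify and truncate and use dominated convergence, whereas the paper approximates $\phi$ from below by smooth functions whose derivatives have compact support and concludes by monotone convergence; both are routine and both work.
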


\begin{remark}
\label{rem:mutoinfty}
Note that as the spring constant $\mu$ increases toward $\infty$, the vector of normalized leaves $(\hat{X}^\mu_t(u), u \in \mathcal{N}_t)$ converges in law toward i.i.d.\@ Gaussian random variables with variance $t$. This can be checked by computing the covariance function of this vector, conditionally on $\mathcal{N}_t$. Therefore, we have
$
  \lim_{\mu \to \infty} \cE^\mu_t = \cE^\infty_t 
$ in law, for the topology of weak convergence,
justifying the notation.
\end{remark}

\begin{proof}
Remember that, since the branching events are independent of the spatial displacements, one can construct a branching Ornstein-Uhlenbeck with spring constant $\mu$  by first drawing its genealogical Yule tree $(\mathcal{N}_s, s\ge 0)$ then, conditionally on $(\mathcal{N}_s, s\ge 0)$ the spatial positions  $(X^\mu_s(u), u\in \mathcal{N}_s, s\ge 0)$. Thus, given two spring constants $\mu,\nu$, we can construct the two branching Ornstein-Uhlenbeck processes $X^\mu$ and $X^\nu$ using the same $(\mathcal{N}_s, s\ge 0)$. In the rest of the proof we work conditionally on $(\mathcal{N}_s, s\ge 0)$ to study the extremal processes.

For $u,v \in \mathcal{N}_t$, we denote by $\tau_{u,v}$ the time of the most recent common ancestor of $u$ and $v$. The covariance matrix of the Gaussian vectors $X^\mu$ is given by
\[
  \Var(X^\mu_t(u)) = \frac{1-\rme^{-2\mu t}}{2\mu} \text{ and }   \Cov(X^\mu_t(u),X^\mu_t(v)) = \rme^{-2\mu(t-\tau_{u,v})} \frac{1-\rme^{-2\mu \tau_{u,v}}}{2\mu}.
\]
Recall that we normalize positions to have variance $t$, setting as in \eqref{def hat}
\[
  \hat{X}^\mu_t(u) = X^\mu_t(u) \sqrt{ \frac{2\mu t}{1-\rme^{-2\mu t}} }  .
\]
As a result, we have that
\begin{equation}
  \label{eqn:cov}
 \Cov(\hat{X}^\mu_t(u),\hat{X}^\mu_t(v)) = t \frac{\rme^{2 \mu \tau_{u,v}}-1}{\rme^{2 \mu t} - 1}.
\end{equation}

Observe that when $\mu \leq \nu$ (including the case $\nu = \infty$), we have that  
\[  \Cov(\hat{X}^\mu_t(u),\hat{X}^\mu_t(v)) \geq \Cov(\hat{X}^\nu_t(u),\hat{X}^\nu_t(v)),\]
for all $u,v \in \mathcal{N}_t$. Indeed, it is easy to verify that for all $0<s < t$ fixed the function $\mu \mapsto \frac{\rme^{2 \mu s} - 1}{\rme^{2 \mu t} - 1}$ is non-increasing in $\mu  \in \R$.

We start by showing the result for  $\phi : \R \to \R$, a smooth non-negative non-decreasing function, such that $\phi'$ has compact support. Then the function
\[
  F : x \in \R^{\mathcal{N}_t} \mapsto \exp\left( -\sum_{u \in \mathcal{N}_t} \phi(x_u) \right),
\]
is twice differentiable and constant outside of a compact, hence its second derivatives are bounded. It satisfies
\[
  \frac{\partial^2 F}{\partial x_i \partial x_j}(x) = \phi'(x_i) \phi'(x_j) \exp\left( - \sum_{u \in \mathcal{N}_t} \phi(x_u)\right) \geq 0,
  \quad \text{for all $i \neq j \in \mathcal{N}_t$,}
\]
by monotonicity of $\phi$. Thus, we can apply Kahane's Theorem \ref{thm:kah}, and we have that for all $\mu \leq \nu \leq \infty$, 
\[\E\left( F(\hat {X}_t^\mu) \middle| \mathcal{N}_t \right) \geq \E\left( F(\hat{X}_t^\nu) \middle| \mathcal{N}_t\right).\]
Therefore, averaging over the genealogical tree $(\mathcal{N}_t, t \geq 0)$, we obtain that
\[
  \mu \in (-\infty, \infty] \mapsto  \E\left( \exp\left( - \crochet{\cE_t^\mu,\phi} \right) \right)
\]
is non-increasing.

To conclude, note that any continuous non-decreasing non-negative function $\phi$ can be approached from below by a sequence $(\phi_n, n \geq 1)$ of smooth non-decreasing functions with derivatives having compact support. Moreover,
\[
  \lim_{n \to \infty} \E\left( \exp\left(-\crochet{\cE_t^\mu, \phi_n}\right) \right) = \E\left( \exp\left(-\crochet{\cE_t^\mu,\phi}\right) \right) 
\]
by monotone convergence. Hence, we conclude that $\mu \mapsto \E\left( \exp\left( - \crochet{\cE_t^\mu,\phi} \right) \right)$ is non-increasing.
\end{proof}

\subsection{Proof of Theorem \ref{thm:main}}
\label{subsec:terminal}

We complete the proof of Theorem \ref{thm:main} in this section. We start with the observation that the family of limiting point measures $(\cE^\gamma_\infty, \gamma \in (0,\infty])$, defined in \eqref{eqn:defnouveauEgamma}, is continuous in distribution.
\begin{proposition}
\label{prop:continuity}
The family $\big( (\cE^\gamma_\infty, \max \cE^\gamma_\infty); \gamma \in (0, \infty] \big)$ is continuous in law. Otherwise said, as per Lemma~\ref{lem:identificationLimit}, for all continuous $\phi : \R \to \R_+$ non-decreasing with bounded support from the left, the function
\[
  \gamma \in (0,\infty] \mapsto  \E\left( \rme^{ - \crochet{\cE^\gamma_\infty,\phi} } \right) \quad \text{ is continuous.}
\]
\end{proposition}

\begin{proof}
 Let $\phi$ be a continuous non-decreasing function, with support bounded from the left. For any $\gamma > 0$, by Campbell's formula, we have
\begin{equation}\label{eq:conv_Egamma_Laplace}
  \E\left( \rme^{ - \crochet{\cE^\gamma_\infty,\phi} } \right) =
  \E\left(\exp\left( - \int_\R \E\left( 1 - \rme^{-\crochet{\calD^{d_\gamma},\phi( d_\gamma\cdot \, + z)}} \right) \sqrt{2}{C}(d_\gamma) W_\infty^{\sqrt 2c_\gamma} \rme^{-\sqrt{2}z}\dd z \right) \right).
\end{equation}
We observe that ${C}(d_\gamma), W_\infty^{\sqrt 2 c_\gamma}$ as well as $\E\Big( 1 - \rme^{-\crochet{\calD^{d_\gamma},\phi( d_\gamma\cdot \, + z)}} \Big)$ are non-negative for all $\gamma > 0$ and hence the exponential term on the right-hand side of \eqref{eq:conv_Egamma_Laplace} is bounded by $1$. Therefore, by dominated convergence, it is enough to prove that each of the above functions is continuous.

It is obvious from the definition that both functions $\gamma \mapsto c_\gamma$ and $\gamma \mapsto d_\gamma$ are continuous in $\gamma$ with $c_\gamma \in (0,1)$ and $d_\gamma>1$ for all $\gamma > 0$. At the same time, Theorem~\ref{prop_continuity_decoration_fctn} says that both 
\[
\gamma \mapsto {C}(\rho)
\quad \text{and} \quad
\rho \mapsto \E\left( 1 - \rme^{-\crochet{\calD^\rho,\phi(\rho \cdot + z)}} \right)
\]
are continuous in $\rho > 1$, by dominated convergence. Finally, Biggins \cite{Big92} proved that the convergence of the additive martingale $W^{\sqrt{2}\rho}$ is uniform on compact subsets of $(-1,1)$, {\it i.e.}\@ for all $\epsilon \in (0,1)$, we have
\[
  \lim_{t \to \infty} \sup_{\rho \in [\epsilon-1,1-\epsilon]} \left| W_t^{\sqrt 2\rho} - W^{\sqrt 2 \rho}_\infty \right| = 0 \quad \text{a.s.}
\]
As a result, we deduce that $W^{\sqrt 2 \rho}_\infty$ is continuous in $\rho$, completing the proof.
\end{proof}

We now show that the point process $\cE^\infty_t$ defined in Lemma~\ref{lem:extremal} converges in law, as $t \to \infty$, to the Poisson point process $\cE^\infty_\infty$ defined in \eqref{eqn:defnouveauEgamma}, jointly with its maximum. Recall that
\[
  \cE^\infty_t = \sum_{u \in \mathcal{N}_t} \delta_{\hat{X}^\infty_t(u) - \sqrt{2} t + \frac{1}{2\sqrt{2}} \log t} \quad \text{and} \quad \cE^\infty_\infty \sim PPP\left( \tfrac{1}{\sqrt{2\pi}} W^0_\infty \rme^{-\sqrt{2} x} \dd x \right),
\]
where $(\hat{X}^\infty_t(u), u \in \mathcal{N}_t)$ are i.i.d.\@ centred Gaussian random variables with variance $t$.
\begin{lemma}
\label{lem:remark}
We have
\[
  \lim_{t \to \infty} \left( \cE^\infty_t, \max \cE^\infty_t \right) = \left( \cE^\infty_\infty, \max \cE^\infty_\infty \right) \quad \text{ in law}.
\]
\end{lemma}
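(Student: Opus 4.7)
The plan is to condition on the Yule genealogy and then reduce the claim to a classical Poisson limit for i.i.d.\@ Gaussian samples. Conditionally on $(\mathcal N_s, s\geq 0)$, the variables $(\hat X^\infty_t(u))_{u\in\mathcal N_t}$ are i.i.d.\@ $\mathcal N(0,t)$, so the problem depends on the genealogy only through the cardinality $N_t:=|\mathcal N_t|$. Recall that $W_t^0 = N_t \rme^{-t}$ is the additive martingale at $\beta=0$ and that $W_t^0\to W_\infty^0\sim\mathrm{Exp}(1)$ almost surely.

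By Lemma~\ref{lem:identificationLimit} it is enough to prove, for every smooth non-decreasing $\phi:\R\to\R_+$ with support in some $[a,\infty)$ and constant for $x$ large, that
\[
\E\!\left[\rme^{-\crochet{\cE^\infty_t,\phi}}\right]\xrightarrow[t\to\infty]{}\E\!\left[\rme^{-\crochet{\cE^\infty_\infty,\phi}}\right].
\]
First, conditioning on $N_t$ and using independence gives
\[
\E\!\left[\rme^{-\crochet{\cE^\infty_t,\phi}}\bigm|N_t\right] = (1-I_t)^{N_t},\qquad I_t := \E\!\left[1-\rme^{-\phi(Y-\sqrt 2\,t+\frac{1}{2\sqrt 2}\log t)}\right],\quad Y\sim\mathcal N(0,t).
\]

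The key computation is a Gaussian expansion: setting $x=y+\sqrt 2 t - \frac{\log t}{2\sqrt 2}$ and expanding $x^2/(2t)$ yields
\[
\frac{1}{\sqrt{2\pi t}}\exp\!\left(-\frac{1}{2t}\Bigl(y+\sqrt 2 t - \tfrac{\log t}{2\sqrt 2}\Bigr)^{\!2}\right) = \frac{1}{\sqrt{2\pi}}\rme^{-t}\rme^{-\sqrt 2 y}\rme^{-\varepsilon_t(y)},
\]
with $\varepsilon_t(y)=(y-\frac{\log t}{2\sqrt 2})^2/(2t)\to 0$ for every fixed $y$. Multiplying $I_t$ by $\rme^t$ and noting that $1-\rme^{-\phi(y)}$ vanishes outside $[a,\infty)$, dominated convergence against the integrable dominant $\mathbf 1_{y\geq a}\,\rme^{-\sqrt 2 y}$ gives
\[
\rme^t I_t \longrightarrow J := \int_{\R}\bigl(1-\rme^{-\phi(y)}\bigr)\frac{\rme^{-\sqrt 2 y}}{\sqrt{2\pi}}\,\dd y.
\]

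To conclude, combining $\rme^{-t}N_t\to W_\infty^0$ a.s.\@ with $I_t\to 0$ gives $N_t\log(1-I_t) = -(\rme^{-t}N_t)(\rme^t I_t)(1+o(1))\to -J\,W_\infty^0$ a.s.; bounded convergence then yields
\[
\E\!\left[\rme^{-\crochet{\cE^\infty_t,\phi}}\right]\longrightarrow\E\!\left[\rme^{-J W_\infty^0}\right],
\]
which is precisely the Laplace functional at $\phi$ of a Cox point process with random intensity $\frac{1}{\sqrt{2\pi}}W_\infty^0\rme^{-\sqrt 2 y}\dd y = \sqrt 2\, C(\infty)\,W_\infty^0\,\rme^{-\sqrt 2 y}\dd y$, i.e.\@ of $\cE^\infty_\infty$ (whose decoration $d_\infty\frakD^{d_\infty}$ is the Dirac mass $\delta_0$). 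The only step requiring a little care is the dominated convergence for $\rme^t I_t$; it is painless thanks to the left-boundedness of $\mathrm{supp}(\phi)$ that makes $\rme^{-\sqrt 2 y}$ integrable on that support.
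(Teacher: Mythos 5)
Your proof is correct, and it takes essentially the same route as the paper: reduce to Laplace functionals via Lemma~\ref{lem:identificationLimit}, condition on the Yule tree size to factorise the Laplace transform into a Gaussian integral raised to the power $N_t=\#\mathcal{N}_t$, and then use the same change-of-variables and dominated-convergence argument to identify the limit of $\rme^t I_t$ (the quantity the paper calls $\rme^t\E(1-\rme^{-f(X_t-m_t)})$, equation \eqref{mathieupleure}). The one genuine difference is how you take the $t\to\infty$ limit of $(1-I_t)^{N_t}$. The paper exploits the explicit fact that $N_t$ is geometric with parameter $\rme^{-t}$, which turns the Laplace functional into the closed form $\rme^{-t}q_t/(1-(1-\rme^{-t})q_t)$ and makes the limit a one-line algebraic computation against the target $(1+J)^{-1}$. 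You instead use only the a.s.\@ martingale convergence $N_t\rme^{-t}\to W^0_\infty$ to get $N_t\log(1-I_t)\to -JW^0_\infty$ a.s., then bounded convergence, and finally evaluate $\E[\rme^{-JW^0_\infty}]=(1+J)^{-1}$ at the end. Your version is marginally more robust --- it would survive replacing the binary-splitting Yule process by any supercritical Markov branching mechanism with a.s.\@ positive martingale limit, where $N_t$ is no longer geometric --- at the cost of needing the small $\log(1-x)\sim -x$ expansion. Both are complete and correct.
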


\begin{proof}
Note this result can be straightforwardly deduced from standard extreme values theory for Gaussian processes. We include a direct self-contained proof which furthermore demonstrates how our toolbox can be used. Recall from Lemma~\ref{lem:identificationLimit} that to prove the joint convergence of $\cE^\infty_t$ and its maximum, it is enough to prove the convergence of $\E\left( \exp\left(- \crochet{\cE^\infty_t,f}\right)\right)$ for all  non-decreasing continuous functions $f$ with support bounded from the left.

Observe, by Campbell's formula for Poisson point processes, that
\[
  \E\left( \exp\left(- \crochet{\cE^\infty_\infty, f} \right) \middle| W^0_\infty \right) =  \exp\left( -W^0_\infty \int \left(1-\rme^{-f(y)}\right) \frac{\sqrt{2}\rme^{-\sqrt{2} y}}{\sqrt{4\pi}} \dd y \right).
\]
Therefore, as $W^0_\infty$ is distributed as a standard exponential random variable, we have
\begin{equation}
  \label{eqq2}
  \E\left( \exp\left(- \crochet{\cE^\infty_\infty, f} \right)\right) = \left(1 + \frac{1}{\sqrt{2\pi}}\int \left(1 - \rme^{-f(y)}\right) \rme^{-\sqrt{2} y} \dd y \right)^{-1}.
\end{equation}

On the other hand, conditioning with respect to $\#\mathcal{N}_t$ the number of leaves at time $t$, and writing $X_t$ for a Gaussian random variable with variance $t$ and $m_t = \sqrt{2}t - \frac{1}{2\sqrt{2}} \log t$, we have
\[
  \E\left( \exp\left(- \crochet{\cE^\infty_t, f} \right) \right) = \E\left(\E\left( \rme^{-f\left(X_t- m_t\right)} \right)^{\#\mathcal{N}_t} \right).
\]
As $\#\mathcal{N}_t$ is a geometric random variable with parameter $\rme^{-t}$, we have
\begin{align}
\E\left( \exp\left(- \crochet{\cE^\infty_t, f} \right) \right) 
  &= \frac{\rme^{-t} \E\left( \rme^{-f\left(X_t- m_t\right)} \right)}{1 - (1-\rme^{-t})\E\left( \rme^{-f\left(X_t-m_t\right)} \right)}\nonumber \\
  &= \frac{ \E\left( \rme^{-f\left(X_t - m_t\right)} \right)}{\rme^t\E\left( 1 -\rme^{-f\left(X_t- m_t\right)} \right) + \E\left( \rme^{-f\left(X_t- m_t\right)} \right)}\label{eqq1}
\end{align}
Therefore, to complete the proof, it is enough to prove that
\begin{equation}
  \label{matp}
  \lim_{t \to \infty} \rme^t\E\left( 1 -\rme^{-f\left(X_t- m_t\right)} \right) = \frac{1}{\sqrt{2\pi}}\int (1 - \rme^{-f(y)}) \rme^{-\sqrt{2} y} \dd y,
\end{equation}
which implies that \eqref{eqq1} converges to \eqref{eqq2} as $t \to \infty$.

We now turn to the proof of \eqref{matp}. By change of variables, we have
\begin{align*}
\E\left( 1-\rme^{-f\left(X_t- m_t\right)} \right)
&= \int  (1-\rme^{-f\left(x- m_t\right)}) \frac{\rme^{-x^2/2t}}{\sqrt{2\pi t}} \dd x\\
&=  \int \frac{(1-\rme^{-f(y)})}{\sqrt{2\pi t}} \rme^{-\sqrt{2}y} \rme^{ - t + \frac{1}{2} \log t} \rme^{- \frac{y^2}{2t}+y \frac{\sqrt{2} \log t}{4t} - \frac{(\log t)^2}{16t}}  \dd y.
\end{align*}
Hence, as the support of $y \mapsto 1 - \rme^{-f(y)}$ is bounded from the left, we can apply the dominated convergence theorem in the above equation yielding, as $t \to \infty$,
\[
  \E\left( 1-\rme^{-f\left(X_t- m_t\right)} \right) \sim \frac{e^{-t}}{\sqrt{2\pi}}\int (1 - \rme^{-f(y)}) \rme^{-\sqrt{2} y} \dd y
\]
concluding the proof.
\end{proof}

Finally, we use the Kahane estimate to control the branching Ornstein-Uhlenbeck process with pulling strength $\mu_t$ by branching Ornstein-Uhlenbeck processes with pulling strength $(\gamma \pm \epsilon)/t$.

\begin{proof}[Proof of Theorem~\ref{thm:main}]
We denote by $(X^{\mu_t}_t(u), u \in \mathcal{N}_t)$ the positions at time $t$ of a branching Ornstein-Uhlenbeck process with spring constant $\mu_t$ (recall that the spring constant $\mu_t$ remains constant throughout the process, up to time $t$) and assume that
\[
  \lim_{t \to \infty} t \mu_t = \gamma \in (0,\infty].
\]

We first consider the case $\gamma < \infty$. Let $0< \underline{\gamma} < \gamma < \bar{\gamma}$. For $t$ large enough $ \underline{\gamma}/t < \mu_t < \bar{\gamma}/t$. Thus, by Lemma~\ref{lem:extremal}, 
\[
  \E\left( \exp\left( - \crochet{\phi,\cE^{\underline{\gamma}/t}_t} \right) \right) \leq \E\left( \exp\left( - \crochet{\phi,\cE^{\mu_t}_t} \right) \right) \leq \E\left( \exp\left( - \crochet{\phi,\cE^{\bar{\gamma}/t}_t} \right) \right),
\]
for all $\phi$ continuous non-decreasing functions $\R \to \R_+$. 

As a result, taking $t \to \infty$, and supposing furthermore that $\phi$ has bounded support on the left, combining Lemma~\ref{lem:identificationLimit} and Theorem~\ref{thm:boh}, we obtain that 
\begin{align*}
  \liminf_{t \to \infty}&\enskip\E\left( \exp\left( - \crochet{\phi,\cE^{\mu_t}_t} \right) \right) \geq \E\left( \exp\left( - \crochet{\phi,\cE_\infty^{\underline{\gamma}}} \right)\right)\\
  \limsup_{t \to \infty}&\enskip\E\left( \exp\left( - \crochet{\phi,\cE^{\mu_t}_t} \right) \right) \leq \E\left( \exp\left( - \crochet{\phi,\cE_\infty^{\bar{\gamma}}} \right)\right).
\end{align*}
Now, letting $\underline{\gamma} \uparrow \gamma$ and $\bar{\gamma} \downarrow \gamma$, using Proposition \ref{prop:continuity} we obtain 
\[
  \lim_{t \to \infty} \E\left( \exp\left( - \crochet{\phi,\cE^{\mu_t}_t} \right) \right) = \E\left( \exp\left( - \crochet{\phi,\cE^{\gamma}} \right)\right).
\]
We conclude by Lemma~\ref{lem:identificationLimit} that $(\cE^{\mu_t}_t,\max \cE^{\mu_t}_t)$ converge toward $(\cE^\gamma,\max \cE^\gamma)$.

We now consider the case $\gamma = \infty$. If $\lim_{t \to \infty} t \mu_t = \infty$, then for all $\underline{\gamma} > 0$,  one has $\mu_t \geq \underline{\gamma}/t$ for all $t$ large enough. One the other hand, $(X^{\mu_t}_t(u), u \in \mathcal{N}_t)$ is straightforwardly ``more correlated'' than i.i.d.\@ Gaussian random variables (formally corresponding to the case $\gamma=\infty$). Hence, using again Lemma~\ref{lem:extremal}, then Lemma~\ref{lem:identificationLimit} and Theorem~\ref{thm:boh} for the lower bound, and Lemma~\ref{lem:remark} for the upper bound, we obtain 
\begin{align*}
  \liminf_{t \to \infty} \E\left( \exp\left( - \crochet{\phi,\cE^{\mu_t}_t} \right) \right) &\ge \E\left( \exp\left( - \crochet{\phi,\cE^{\underline\gamma}_\infty} \right)\right),\\
   \limsup_{t \to \infty} \E\left( \exp\left( - \crochet{\phi,\cE^{\mu_t}_t} \right) \right) &\le \E\left( \exp\left( - \crochet{\phi,\cE^{\infty}_\infty} \right)\right)
\end{align*}
for all smooth increasing function $\phi:\R \to [0,1]$ such that $\phi'$ has compact support.
Letting $\underline \gamma \to \infty$ concludes the proof of Theorem~\ref{thm:main}.
\end{proof}

\section{Open questions and future work}
\label{sec:op}

The cases  $t\mu_t \to \gamma \in (0,\infty)$  interpolate between the \emph{uncorrelated case} and the branching Brownian motion regime ($\mu_t = 0$). Notice, though, that the multiplicative factor of the logarithmic correction remains equal to $\frac{1}{2\sqrt{2}} $ (as in the  uncorrelated case) and not $\frac{3}{2\sqrt{2}}$ (as in the branching Brownian motion). We believe that there is a second transition when $t\mu \to 0$ where one  gradually goes from the $\frac{1}{2\sqrt{2}} \log t $ correction to $\frac{3}{2\sqrt{2}} \log t$ while the decoration measure always is $\calD^1,$ which is the decoration of the branching  Brownian motion.

More precisely, it is predicted in \cite{DMS} that ${C}(\rho) \sim \kappa (\rho-1)$ as $\rho \to 1$, with the same constant $\kappa$ as in~\eqref{DPPP du BBM}.  Note that $\kappa \approx 1.18$ is also the constant such that $\lim_t \P(M_t \geq \sqrt{2} t - \frac{3}{2\sqrt{2}} \log t + y) \sim \frac{\kappa}{\sqrt 2} y e^{-\sqrt{2}y}$, as $y\to \infty$. This constant is proved to exist for all branching random walks in \cite[Proposition~4.1]{Aid}. Note that in \cite{DMS} the function $\Phi$ defined by 
\[
u(ct,t) \sim \frac{\rme^{-t(c^2/4-1)}}{\sqrt{4\pi t }} \Phi(c) \quad \text{ as } t \to \infty
\]
where $u$ is the solution of the Fisher-KPP equation $\partial_t u =\partial^2_x u +u(1-u)$ started from the Heavyside initial condition is the analogue of $C$. The exact correspondence between the functions $\Phi$ and $C$ is
\[
C(\rho) =\frac{\rho}{\sqrt{4\pi}} \Phi(2\rho).
\]
Our factor $\kappa$ is thus given by the constant denoted $2\alpha$ in \cite{DMS} (see Equation (73) there).

On the other hand,  we also know from  \cite{Madaule}, that for the additive martingale $W^\beta$
\[
  \lim_{\beta \to \sqrt{2}-} \frac{W^\beta_\infty}{\sqrt{2}-\beta} = \sqrt{2} Z_\infty,
\]
with $Z_\infty$ the limit of the derivative martingale. Since $d_\gamma \simeq 1+\gamma/2$ and $c_\gamma \simeq 1-\gamma/2$ when $\gamma \to 0$, we see that 
\[
 C(d_\gamma) W^{\sqrt 2 c_\gamma}_\infty \simeq  \frac{\kappa \gamma^2}{2} Z_\infty \quad \text{as $\gamma \to 0$}.
\]
Since $\gamma^2 e^{-\sqrt 2 x} = e^{-\sqrt 2 (x-\sqrt 2 \log \gamma)}$, the extremal point process $\cE^\gamma_\infty$ is roughly  $\cE_\infty$, the centred extremal point process of the standard  branching Brownian motion see \eqref{DPPP du BBM},  
shifted to the left  by $\sqrt 2 | \log \gamma|+\mathcal O(1)$ (as $\gamma\to 0$).
This  might suggest that the above-mentioned 
intermediate logarithmic corrections between $\frac{1}{2\sqrt{2}}$ and $\frac{3}{2\sqrt{2}}$ should appear for $\mu_t = t^{-\alpha}$ with $\alpha \in (1,3/2)$ , 
and the extremal point measure would be the same as for the branching Brownian motion as soon as $\mu_t = o(t^{-3/2})$.
This would complement the recent work \cite{BoH18} on a similar phenomenon for branching Brownian motion with piecewise constant variance.

It may be worth noting that our model is notably different from the one studied by Kiestler and Schmidt \cite{KS} which yields a different interpolation between the uncorrelated case and the branching Brownian motion. In that later model, the extremal model is a Poisson point process without decoration, but the logarithmic correction of the median of the maximal displacement interpolates between $-\frac{1}{2 \sqrt{2}}$ and $\frac{-3}{2\sqrt{2}}$. On the contrary, in our case, the decoration of the extremal processes interpolate continuously between the absence of decoration of the uncorrelated case and the decoration of the branching Brownian motion. However, the logarithmic correction does not interpolate continuously on the scale of parameters we are considering.

The case $\mu<0$ is also  interesting and is not covered in the present work. Notice that in the case $\mu>0$ we rely heavily on  the results from Bovier and Hartung \cite{BoH15}. However we think that the $\mu<0$ case corresponds to that of decreasing variances for the variable speed branching Brownian motion for which results concerning the position of the maximum are known (see e.g. Maillard and Zeitouni \cite{MaZ}), but not concerning the full extremal point process.

\appendix

\section{Proof of Lemma \ref{lem:identificationLimit}}
\begin{proof}
Obviously, (i) implies (ii) and (iii) implies (iv). It remains to proove
that (ii) implies (iii) and (iv) implies (i). 

We start by proving that (ii) implies (iii). First consider the case of
a non-negative continuous function $\phi$ with support bounded from the
left, and introduce for
$A\in\R$ 
$$\phi^A: x\mapsto \begin{cases}\phi(x)&\text{if $x<A$}\\
	(A+1-x)\phi(A)&\text{if $x\in[A,A+1]$}\\
	0&\text{if $x>A+1$}.\end{cases}$$
The function $\phi^A$ is continuous compactly supported, hence by (ii) we have
\[
  \lim_{t \to \infty} \E\big( \rme^{ - \crochet{\mathcal{P}_t,\phi^A}
}\big) = \E\big( \rme^{ -\crochet{\mathcal{P}_\infty,\phi^A}}  \big).
\]
By triangular inequality,
$$\begin{aligned}\Big| 
\E\big( \rme^{ - \crochet{\mathcal{P}_t,\phi} }\big) 
-
\E\big( \rme^{ - \crochet{\mathcal{P}_\infty,\phi} }\big) 
\Big|
\le&\quad\,
\Big|
\E\big( \rme^{ - \crochet{\mathcal{P}_t,\phi} }\big) 
-
\E\big( \rme^{ - \crochet{\mathcal{P}_t,\phi^A} }\big) 
\Big|
\\&
+
\Big|
\E\big( \rme^{ - \crochet{\mathcal{P}_t,\phi^A} }\big) 
-
\E\big( \rme^{ - \crochet{\mathcal{P}_\infty,\phi^A} }\big) 
\Big|
+
\Big|
\E\big( \rme^{ - \crochet{\mathcal{P}_\infty,\phi^A} }\big) 
-
\E\big( \rme^{ - \crochet{\mathcal{P}_\infty,\phi} }\big) 
\Big|.
\end{aligned}
$$
Moreover, as $\phi$ is non-negative, we have for all $t\ge0$ and also for
$t=\infty$:
\[
  \Big|  \E\big( \rme^{ - \crochet{\mathcal{P}_t,\phi} } \big)
-  \E\big( \rme^{ - \crochet{\mathcal{P}_t,\phi^A} } \big) \Big|
  \leq \P\left( \max \mathcal{P}_t \geq A \right).
\]
Hence, by convergence of $\max \mathcal{P}_t$, we have
\[
  \limsup_{t \to \infty} 
\Big|\E\big( \rme^{ - \crochet{\mathcal{P}_t,\phi} }\big) 
-
\E\big( \rme^{ - \crochet{\mathcal{P}_\infty,\phi} }\big) 
\Big|\le 2 \P\left( \max \mathcal{P}_\infty \geq A \right).
\]
As the right hand side goes to zero as $A\to\infty$, we have proved (iii)
for non-negative functions. Now consider an arbitrary continuous function
$\phi$ with support bounded on the left, and write
$$\phi = \phi_+ - \phi_-\qquad\text{where $\phi_+(x)=\max\big(\phi(x),0\big)$
and $\phi_-(x)=\max\big(-\phi(x),0\big)$}.$$
Then, for any $\alpha,\beta\ge0$, the function
$\alpha\phi_++\beta\phi_-$ is continuous non-negative  with support bounded
on the left and, therefore,
$$\lim_{t\to\infty}
\E\Big( \rme^{ - \alpha\crochet{\mathcal{P}_t,\phi_+}
-\beta\crochet{\mathcal{P}_t,\phi_-} }\Big) 
=
\E\Big( \rme^{
- \alpha\crochet{\mathcal{P}_\infty,\phi_+}-\beta\crochet{\mathcal{P}_\infty,\phi_-} }\Big).$$
We conclude that
$(\crochet{\mathcal{P}_t, \phi_+},\crochet{\mathcal{P}_t, \phi_-})$ jointly
converge in law toward $(\crochet{\mathcal{P}_\infty,
\phi_+},\crochet{\mathcal{P}_\infty,
\phi_-})$. Therefore, $\crochet{\mathcal{P}_t,\phi}$ converges as well toward
$\crochet{\mathcal{P}_\infty,\phi}$, which implies that (iii) holds.

\medskip

We now prove that (iv) implies (i).
Let $f$ be a $\mathcal{C}^\infty$ non-decreasing function such that $f(x)=0$ for $x<0$ and
$f(x)=1$ for $x>1$. For any $y\in\R$ and $\epsilon>0$, we set
$f_{\epsilon,y}(x)=f\big(\epsilon^{-1}(x-y)\big)$.

Noting that 
$f_{\epsilon,y}(x)\le\ind{x>y}\le f_{\epsilon,y-\epsilon}(x)$, we have for all $(y_1,\ldots,y_n)\in\R^n$, $(\lambda_1,\ldots,\lambda_n)\in\R_+^n$
and $\epsilon>0$:
$$\E\Big( \rme^{ - \sum_{i} \lambda_i
\crochet{\mathcal{P}_t,f_{\epsilon,y_i-\epsilon}}}\Big)
\le
\E\Big( \rme^{ - \sum_{i} \lambda_i \mathcal P_t((y_i,\infty))}
\Big)
\le
\E\Big( \rme^{ - \sum_{i} \lambda_i
\crochet{\mathcal{P}_t,f_{\epsilon,y_i}}}\Big).
$$
As $t\to\infty$, the two bounds converge by (iv) applied to the functions
$\sum_i\lambda_i f_{\epsilon,y_i}$ and
$\sum_i\lambda_i f_{\epsilon,y_i-\epsilon}$
$$\begin{aligned}
\E\Big( \rme^{ - \sum_{i} \lambda_i
\crochet{\mathcal{P}_\infty,f_{\epsilon,y_i-\epsilon}}}\Big)
\le&
\liminf_{t\to\infty}\E\Big( \rme^{ - \sum_{i} \lambda_i \mathcal P_t((y_i,\infty))}
\Big)
\\
\le&\limsup_{t\to\infty}\E\Big( \rme^{ - \sum_{i} \lambda_i \mathcal P_t((y_i,\infty))}
\Big)
\le
\E\Big( \rme^{ - \sum_{i} \lambda_i
\crochet{\mathcal{P}_\infty,f_{\epsilon,y_i}}}\Big).
\end{aligned}
$$
Note that
$f_{\epsilon,y}(x)\to\ind{x>y}$ and
$f_{\epsilon,y-\epsilon}(x)\to\ind{x\ge y}$ as $\epsilon\to0$. Hence one
gets
\begin{equation}\begin{aligned}
\E\Big( \rme^{ - \sum_{i} \lambda_i
\mathcal{P}_\infty([y_i,\infty))} \Big)
\le&
\liminf_{t\to\infty}\E\Big( \rme^{ - \sum_{i} \lambda_i \mathcal P_t((y_i,\infty))}
\Big)
\\
\le&\limsup_{t\to\infty}\E\Big( \rme^{ - \sum_{i} \lambda_i \mathcal P_t((y_i,\infty))}
\Big)
\le
\E\Big( \rme^{ - \sum_{i} \lambda_i
\mathcal{P}_\infty((y_i,\infty))}\Big).
\end{aligned}\label{liminfsup}
\end{equation}
We conclude that 
$(\mathcal P_t((y_i,\infty)), i\le n)$
jointly converge in law to
$(\mathcal P_\infty((y_i,\infty)), i\le n)$ as $t\to\infty$, except at
discontinuity points $y_i$ where $\mathcal P_\infty(\{y_i\})>0$ with
positive probability. Hence, 
$\mathcal P_t$ converges in law to $\mathcal P_\infty$ for the topology of
vague convergence.

In \eqref{liminfsup}, add one extra pair $(\lambda,y)$ to the $\lambda_i$, $y_i$, and send $\lambda$ to infinity.
Noticing that for $A\ge0$ that 
$$\E(A\ind{\max\mathcal P_t \le y}) \le
\E\big( A \rme^{ - \lambda \mathcal P_t((y,\infty))}\big)\le
\E(A\ind{\max\mathcal P_t \le y})+\rme^{-\lambda} \E(A),$$
one gets
\begin{align*}
\E\Big( \rme^{ - \sum_{i} \lambda_i
\mathcal{P}_\infty([y_i,\infty))}\ind{\max\mathcal P_\infty \le y}) \Big)
\le&
\liminf_{t\to\infty}\E\Big( \rme^{ - \sum_{i} \lambda_i \mathcal P_t((y_i,\infty))}
\ind{\max\mathcal P_t \le y})\Big)
\\
\le&\limsup_{t\to\infty}\E\Big( \rme^{ - \sum_{i} \lambda_i \mathcal P_t((y_i,\infty))}
\ind{\max\mathcal P_t \le y})\Big)\\
\le&
\E\Big( \rme^{ - \sum_{i} \lambda_i
\mathcal{P}_\infty((y_i,\infty))}\ind{\max\mathcal P_\infty \le y})\Big).
\end{align*}
Hence $(\mathcal P_t, \max\mathcal P_t)$ converges to $(\P_\infty, \max\mathcal P_\infty)$ in law jointly.
\end{proof}

\paragraph*{Acknowledgements:} 
A.C.'s work is supported by the Swiss National Science Foundation~200021\underline{{ }{ }}163170.
B.M. and É.B. are partially funded by ANR-16-CE93-0003 (ANR MALIN). B.M. is also partially funded by a PEPS JCJC 2019 grant from CNRS. J.B. is partially supported by ANR grants ANR-14-CE25-0014 (ANR GRAAL) and ANR-14-CE25-0013 (ANR NONLOCAL).

\bibliographystyle{alpha}

\end{document}